\newtheorem{thm}{Theorem}[section]
\newtheorem{cor}[thm]{Corollary}
\newtheorem{lem}[thm]{Lemma}
\newtheorem{prop}[thm]{Proposition}
\theoremstyle{definition}
\newtheorem{as}{Assumption}
\numberwithin{equation}{section}
\newcommand{\et}{\mathbb{E}_{x,t}}
\newcommand{\Tr}{\operatorname{Tr}}
\begin{document}


\baselineskip=15pt



\title[Existence results  to Isaacs equations]
 {Existence results for Isaacs equations with local conditions and related semilinear Cauchy problems}

\author[D. Z.]{Dariusz Zawisza \\{\tiny Institute of Mathematics, Jagiellonian University in Krakow,\\ dariusz.zawisza@im.uj.edu.pl}}

\address{\indent Institute of Mathematics \newline \indent Faculty of Mathematics and Computer Science \newline \indent  Jagiellonian University in Krakow \newline \indent{\L}ojasiewicza  6 \newline \indent 30-348 Krak{\'o}w, Poland }

\email{dariusz.zawisza@im.uj.edu.pl}

\date{}

\subjclass[2010]{35K58,49J20,91A15,91A23}
\keywords{Cauchy problem, Hamilton Jacobi Bellman Isaacs equation, robust control, semilinear parabolic equation, stochastic game}

\begin{abstract}
Our goal is to prove existence results for classical solutions to some general nondegenerate Cauchy problems which are  natural generalizations of Isaacs equations. For the latter we are able to extend our results by admitting local conditions for  coefficients. Such equations appear naturally for instance in  robust control theory. Using our general results, we can solve not only Isaacs equations, but also equations for other sophisticated  control problems, for instance models with state dependent constraints on the control set. 
\end{abstract}

\maketitle

\begin{center}{\small
First version: 31 August 2016 \\
This version: published in Ann. Polon. Math. 121 (2018), 175--196}
\end{center}

\section{Introduction}

Our main concern here is to prove some general results regarding a classical solution ($u \in \mathcal{C}^{2,1}(\mathbb{R}^{N}\times[0,T)) \cap \mathcal{C}(\mathbb{R}^{N}\times[0,T]) $ ) to the semilinear Cauchy problem of the type

\begin{equation} \label{pierwsze}
 \begin{cases}
u_{t}+\tfrac12\Tr(a(x,t)D_{x}^{2}u)+ H(D_{x}u,u,x,t)=0, &\quad (x,t) \in \mathbb{R}^{N} \times [0,T), \\
u(x,T)=\beta(x), &\quad x \in \mathbb{R}^{N}.
\end{cases}
\end{equation}
We use $u_{t}$ to denote the derivative with respect to $t$, $D_{x} u$ to denote the gradient $(u_{x_{1}},u_{x_{2}},\ldots ,u_{x_{N}})$ and $D_{x}^{2} u$ is used to denote the matrix of the second order derivatives.

Our motivation comes from the fact that equation \eqref{pierwsze} can be used as an excellent starting point to solve  many control and dynamic game problems. However, in the existing literature it is usually hard to find sufficiently general and easily verifiable results for classical solutions which can be directly applied to the HJB theory. For instance, equation \eqref{pierwsze} is a natural generalization of the following Isaacs type equation:
\begin{multline}  \label{equationL1} 
 u_{t}+ \tfrac12\Tr( a(x,t)D^{2}_{x} u) \\  +\max_{\delta \in D} \min_{\eta \in \Gamma} \biggl(i(x,t,\delta,\eta) D_{x}u  +  h(x,t,\delta,\eta) u+ f(x,t,\delta,\eta)\biggr) =0, \\ \quad (x,t) \in \mathbb{R}^{N} \times [0,T)
\end{multline}
with the terminal condition
$u(x,T)=0$, where $D \subset \mathbb{R}^{k}$ and $\Gamma  \subset \mathbb{R}^{l}$ are fixed compact sets. In stochastic control context, the existence of a classical solution is often crucial to determine the optimal control/saddle point and helpful to establish a convergence rate for numerical methods. To explore this topic more, it is worth to read Dupuis and James \cite{Dupuis}.

Equation \eqref{equationL1} is very popular in stochastic game theory and has gained a lot of attention recently in robust stochastic optimal control, where it is used to solve optimization problems with  model ambiguity (or model misspecification). For financial aspects of model ambiguity  see for example  Hern\'{a}ndez-Hern\'{a}ndez and Schied \cite{Schied4}, Schied \cite{Schied3}, Tevzadze et al. \cite{tevzadze}, Zawisza \cite{Zawisza2} and references therein. For a discussion concerning robust control in environmental economics see Xepapadeas \cite{x}, Jasso-Fuentes and L\'{o}pez--Barrientos \cite{jasso2} or L\'{o}pez- Barrientos et al. \cite{jaso}. In fact they formulate problems in the infinite time horizon setting, but there is no problem in rewriting it in the fixed time framework. The latter work provides general existence results for  classical solutions to the associated elliptic Isaacs equations. 

Moreover, equation \eqref{equationL1} might be used as well as the first step of solving ergodic control problems: for the risk sensitive optimization see Fleming and McEneaney \cite{mcefle} and  Zawisza \cite{zawisza} for the consumption - investment problem.

Equation \eqref{pierwsze} can be used not only to solve Isaacs equation, but also to other non-standard control problems. In finance, it can be applied to solve recursive utility problems,  for example these considered by Kraft et al. \cite{Kraft}. We would like to lay the emphasis  on the  stochastic control problems with  state dependent bounds for the control set. At the end of the second section we will present some particular optimal dividend problem linked to this issue. 

Apart from stochastic control applications, our paper has some useful applications in other fields. First of all, for the last few decades, many researchers have investigated the theory of parabolic equations with unbounded coefficients. For the recent contribution in this field see Kunze et al. \cite{Kunze}, Angiuli and Lunardi \cite{Angiuli} and the survey paper of Lorenzi \cite{Lorenzi}. Our Theorem \ref{main} provides some new existence results in this area. 

In addition, our work might be helpful in proving the existence results for forward-backward stochastic systems. The detailed analysis is contained in Ma and Yong \cite[Chapter 4]{Ma}. The link between backward equations and quasilinear equations is mutual i.e. some results concerning existence theorems for partial differential equations  can be proved by applications of backward stochastic equations.
One of the most general results concerning existence of solutions to equations \eqref{pierwsze} and \eqref{equationL1} can be deduced from $W^{2,1}$ theorem proved by BSDE methods in Delarue and Guatteri \cite{Delarue}. Their results are strong enough to cover as well our existence results under our Assumption 1 (Theorem \ref{thefirst}). However, the importance of our proof lay in the fact that we use the fixed point method with respect to a norm, which ensures that the solution can be uniformly approximated by the solutions to the linear equations  and guarantees relatively fast convergence together with the first derivative. 

During the peer revision process we have also discovered  that the  same set of conditions (Assumption 1) is largely covered by the recent result of Addona et al. \cite[Theorem 3.6]{Addona} and proved by exploiting the fixed point approach. But, they have used  slightly different technique which operates on the solution defined on the small time interval $(T-\delta,T]$  and they have not proved global uniform convergence to the fixed point. Moreover, they assume $C^{1+\alpha}$ regularity in the space variable for the second order coefficient $a$.  

There are of course some other works. Kruzhkov and Olejnik's \cite{Kruzhkov}  and Friedman's \cite{friedman} results work for many Isaacs equations but with trivial second order term ($a=I$). Rubio \cite{Rubio} considered only stochastic control formulation which is not directly applicable to the max-min framework and other
 semilinear equations mentioned in this paper. In addition, our last result (Theorem \ref{last}) is strong enough to extend Rubio's \cite{Rubio} results to the case when the functions  $f$ and $\beta$ satisfy the exponential growth condition and the function $h$ has the linear growth condition. Ma and Yong's theorem \cite[Chapter 4]{Ma} holds under smoothness conditions which are not precisely indicated. In addition, it is worth mentioning, that standard stochastic control books such as Fleming and Rishel \cite{FlemingRishel} and Fleming and  Soner \cite{Fleming} provide general results, but still they are not sufficient for many applications. We should also recall here the work of Addona \cite{A} where some existence results concerning so called mild solutions to equation \eqref{pierwsze} are considered and Fleming and Souganidis \cite{FS} where the value function of the suitable game is proved to be a viscosity solution to \eqref{equationL1} under a global Lipschitz condition for coefficients. 

Our paper is structured as follows. First, we will prove the existence result under conditions which allows us to apply the approach based on the fundamental solution and fixed point arguments. The fixed point approach can be useful to obtaining numerical solution to our equation. Further, we will extend it to allow some local conditions by making some approximations and transforming the equation into the form which enables us to use the stochastic representation. Such type of approximation was used ealier in Zawisza \cite{zawisza3} to prove an existence result for some infinite horizon control problems.
At the end we will focus on the explicit Isaacs equation for the stochastic game formulation.

\section{General results}
We start with proving the existence theorem under conditions listed in Assumption \ref{as1}. Further, we will apply it to prove a suitable result under conditions given in Assumption \ref{as2}.

\begin{as} \label{as1}
\text{ }
\begin{itemize}
\item[{\bf A1)}]
The matrix $a$ is of the form $a=\sigma \sigma^{T}$, where the coefficients  $\sigma_{i,j}(x,t)$, \\ $i,j=1,2,\ldots,n$, are uniformly bounded, Lipschitz continuous on compact subsets in $\mathbb{R}^{N} \times [0,T]$, and Lipschitz continuous in $x$ uniformly with respect to $t$. In addition there exists a constant $\mu>0$ such that for any  $\xi \in \mathbb{R}^{N} $
\[ 
\sum_{i,j=1}^{N} a_{i,j}(x,t) \xi_i \xi_{j} \geq \mu |\xi|^{2}, \quad (x,t) \in \mathbb{R}^{N} \times [0,T].
 \]
 \item[{\bf A2)}] The function $\beta$ is bounded and uniformly Lipschitz continuous.
 
 \item[{\bf A3)}] The function $H$ is  H\"{o}lder continuous on compact subsets of $\mathbb{R}^{2 N+1}\times[0,T)$. Moreover, let there exist $K>0$ such that for all\\ $(p,u,x,t),\; (\bar{p},\bar{u},x,t) \in \mathbb{R}^{2N+1} \times [0,T]$
\begin{equation}\label{warH1}
\begin{aligned}
&|H(p,u,x,t)| \leq K\bigl(1+|u|+|p|\bigr),\\
&|H(p,u,x,t)-H(\bar{p},\bar{u},x,t)| \leq K \bigl(|u-\bar{u}| + |p - \bar{p}|\bigr). 
\end{aligned}
\end{equation}
\end{itemize} 
\end{as}

Let $C_{b}^{1,0}$ stand for the space of all functions which are continuous,\\ bounded and have the first derivative with respect to $x$ which is also continuous and bounded. The space is equipped with the family of norms:  
\begin{equation} \label{norm}
\|u\|_{\kappa}: = \sup_{(x,t) \in \mathbb{R}^{N} \times (0,T]} e^{-\kappa(T-t)} |u(x,t)| +  \sup_{(x,t) \in \mathbb{R}^{N} \times (0,T)} e^{-\kappa(T-t)} |D _{x} u(x,t)|.
\end{equation}
Note that the space $C_{b}^{1,0}$ together with $\|\cdot\|_{\kappa}$ forms a Banach space. This norm was inspired by the work of Becherer and Schweizer \cite{Becherer}. They use this definition of the norm, but without the gradient term. In their paper they have solved some semilinear equations, but their setting  excludes the nonlinearity in the gradient part. The norm \eqref{norm} has also been used by Berdjane and Pergamentschikov  \cite{Berdjane} to solve semilinear equations in the consumption investment problem, but still the nonlinearity in the equation involves only the zero order term $u$.

If we consider first the linear equation 
\begin{equation*}
 \begin{cases}
  u_{t}+ \tfrac12\Tr( a(x,t)D_{x}^{2}u) + f(x,t)=0,  &\quad (x,t) \in \mathbb{R}^{N} \times [0,T), \\
u(x,T)=\beta(x), &\quad x \in \mathbb{R}^{N},
\end{cases}
\end{equation*}
then it is well known (see Friedman \cite[Chapter 1, Theorem 12]{F:B})  that under {\bf A1} and {\bf A2}, for $f$ being bounded and locally H\"{o}lder continuous in $x$ uniformly with respect to $t$ on compact subsets of $\mathbb{R}^{n} \times [0,T) $, there exists a unique bounded classical solution which is given by the formula
\[
u(x,t)= \int_{\mathbb{R}^{N}} \beta(y) \Gamma(x,t,y,T) dy + \int_{t}^{T} \int_{\mathbb{R}^{N}} \Gamma(x,t,y,s)f(y,s) dy ds,
\]
where $\Gamma(x,t,y,s)$ is the fundamental solution to the problem
\[
\Gamma_{t}+\frac{1}{2} Tr(a(x,t)D_{x}^{2} \Gamma)=0.
\]
Moreover,
\begin{equation} \label{111}
\int_{\mathbb{R}^{N}} \Gamma(x,t,y,s) dy =1, \quad \text{for} \; x \in \mathbb{R}^{N},\; 0 \leq t <s \leq T,
\end{equation}
functions $\Gamma$,  $\Gamma_{t}$, $D_{x} \Gamma$,  $D^{2}_{x} \Gamma$ are continuous on the set $x,y \in \mathbb{R}^{N}$, $0 \leq t <s \leq T$, and there exist $c,C>0$ such that 
\begin{align} 
|\Gamma(x,t,y,s)| &\leq \frac{C}{(s-t)^{N/2}} \exp \biggl(-c \frac{|y-x|^{2}}{s-t}\biggr), \notag\\ |D_{x} \Gamma(x,t,y,s)| &\leq \frac{C}{(s-t)^{(N+1)/2}} \exp \biggl(-c \frac{|y-x|^{2}}{s-t}\biggr), \label{fundamental:in}
\end{align}
(see Friedman \cite[ Chapter 6, Theorem 4.5]{friedman2}). In fact Theorem 12 in Friedman  \cite{F:B} requires that the function $f$ should be H\"older continuous in $x$ uniformly with respect to $t \in [0,T]$. Nonetheless, for uniformity restricted to compact subsets of $[0,T)$ the result can be proved in the same way, because for $t < T_{0}<T$ we can write 
\begin{multline*}
\int_{t}^{T} \int_{\mathbb{R}^{N}} \Gamma(x,t,y,s)f(y,s) dy ds \\ = \int_{t}^{T_{0}} \int_{\mathbb{R}^{N}} \Gamma(x,t,y,s)f(y,s) dy ds + \int_{T_{0}}^{T} \int_{\mathbb{R}^{N}} \Gamma(x,t,y,s)f(y,s) dy ds.
\end{multline*}
The first integral $\int_{t}^{T_{0}} \int_{\mathbb{R}^{N}} \Gamma(x,t,y,s)f(y,s) dy ds$ can be treated as in the Friedman's proof. In the second one, there is no singularity and standard theorems about differentiation under the integral sign can be applied.

We consider as well the subspace $C_{b,h}^{1,0}$ consisting of all functions $u$ such that:
\begin{enumerate}
\item $u \in C_{b}^{1,0}$,
\item for any pair of compact sets $B \subset \mathbb{R}^{n}$, $U \subset (0,T)$ there exist $L>0$ and $\gamma \in (0,1]$ such  that
\[|D_{x}u(x,t) - D_{x}u(\bar{x},t)| \leq L|x-\bar{x}|^{\gamma}, \quad (x,t), (\bar{x},t) \in B \times U.
\]
\end{enumerate}

Note that the subspace $C_{b,h}^{1,0}$ might not be closed in $\|\cdot\|_{\kappa}$ and therefore it is not generally a Banach space.
We can define the mapping 
\begin{multline}
\mathcal{T} u (x,t): =  \int_{\mathbb{R}^{N}} \beta(y) \Gamma(x,t,y,T) dy  \\ + \int_{t}^{T}\int_{\mathbb{R}^{N}} H(D_{x} u(y,s),u(y,s),y,s) \Gamma(x,t,y,s) dy ds.
\end{multline}

\begin{prop}
Under Assumption \ref{as1}  the operator $\mathcal{T}$ maps $C_{b,h}^{1,0}$ into $C_{b,h}^{1,0}$ and there exists $\kappa>0$ such that the operator $\mathcal{T}$ is a contraction with respect to $\|\cdot\|_{\kappa}$.
\end{prop}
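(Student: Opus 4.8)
The plan is to verify the two claims separately: (i) $\mathcal{T}$ maps $C_{b,h}^{1,0}$ into itself, and (ii) for a suitable large $\kappa$, $\mathcal{T}$ is a contraction in $\|\cdot\|_\kappa$. For (i), fix $u \in C_{b,h}^{1,0}$ and set $f(y,s) := H(y,t,u(y,s),D_x u(y,s))$. Under assumption \textbf{A3}, the growth bound \eqref{warH1} together with the boundedness of $u$ and $D_x u$ shows $f$ is bounded, and the local H\"older continuity of $H$ composed with the (locally uniform) H\"older continuity of $D_x u$ and the Lipschitz continuity of $u$ shows $f$ is locally H\"older in $x$ uniformly in $t$. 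Then by the cited Friedman result (Chapter 1, Theorem 12 of \cite{F:B}), $\mathcal{T}u$ is the unique bounded classical solution of the corresponding linear Cauchy problem, hence in particular $\mathcal{T}u \in \mathcal{C}^{2,1} \cap \mathcal{C}$, is bounded, and $D_x(\mathcal{T}u)$ exists and is continuous; the Gaussian bounds \eqref{fundamental:in} applied to the representation formula give boundedness of $D_x(\mathcal{T}u)$ (the singularity $(s-t)^{-(N+1)/2}$ is integrable after the Gaussian integration in $y$ produces a compensating factor, leaving $(s-t)^{-1/2}$, which is integrable on $[t,T]$). That $D_x(\mathcal{T}u)$ is itself locally H\"older in $x$ uniformly in $t$ is part of the Schauder-type interior estimates underlying Friedman's theorem, so $\mathcal{T}u \in C_{b,h}^{1,0}$.

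For (ii), take $u, v \in C_{b,h}^{1,0}$. The terminal-data term cancels in $\mathcal{T}u - \mathcal{T}v$, so
\[
\mathcal{T}u(x,t) - \mathcal{T}v(x,t) = \int_t^T \int_{\mathbb{R}^N} \bigl( H(y,t,u(y,s),D_xu(y,s)) - H(y,t,v(y,s),D_xv(y,s)) \bigr) \Gamma(x,t,y,s)\, dy\, ds.
\]
By the Lipschitz bound in \eqref{warH1}, the integrand is bounded in absolute value by $K\bigl(|u(y,s)-v(y,s)| + |D_xu(y,s)-D_xv(y,s)|\bigr)\,\Gamma(x,t,y,s)$, and by definition of $\|\cdot\|_\kappa$ this is at most $K e^{\kappa(T-s)}\|u-v\|_\kappa\, \Gamma(x,t,y,s)$. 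Using $\int_{\mathbb{R}^N}\Gamma(x,t,y,s)\,dy = 1$ and integrating in $s$, the weighted sup of $|\mathcal{T}u - \mathcal{T}v|$ is controlled by $K\|u-v\|_\kappa \sup_{t}\int_t^T e^{-\kappa(T-t)}e^{\kappa(T-s)}\,ds \le \frac{K}{\kappa}\|u-v\|_\kappa$. For the derivative term one differentiates under the integral sign and uses the second bound in \eqref{fundamental:in}; the $y$-integration against $\exp(-c|y-x|^2/(s-t))$ yields a factor of order $(s-t)^{1/2}$, so $\int_{\mathbb{R}^N}|D_x\Gamma(x,t,y,s)|\,dy \le C_1 (s-t)^{-1/2}$, and then $\sup_t \int_t^T e^{-\kappa(T-t)} e^{\kappa(T-s)} C_1 (s-t)^{-1/2}\,ds \le C_1 \int_0^T \tau^{-1/2} e^{-\kappa\tau}\,d\tau =: \rho(\kappa)$, which tends to $0$ as $\kappa \to \infty$. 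Combining, $\|\mathcal{T}u - \mathcal{T}v\|_\kappa \le \bigl(\tfrac{K}{\kappa} + K\rho(\kappa)\bigr)\|u-v\|_\kappa$, and choosing $\kappa$ large enough makes the constant $< 1$.

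I expect the genuinely delicate point to be the regularity claim in (i) — namely that $\mathcal{T}u$ lands back in the subspace $C_{b,h}^{1,0}$ rather than merely in $C_b^{1,0}$, i.e. that $D_x(\mathcal{T}u)$ inherits local H\"older continuity in $x$ uniform in $t$. This does not follow from the crude Gaussian estimates alone; it requires the interior Schauder estimates for parabolic equations that are built into Friedman's construction of the fundamental solution, applied with the fact (just proved) that the source $f = H(\cdot,t,u,D_xu)$ is locally H\"older. One must also be a little careful that the "frozen" time argument $t$ appearing inside $H$ in the definition of $\mathcal{T}$ (rather than the integration variable $s$) does not spoil these estimates; since $t$ enters only as a parameter and $H$ is H\"older in all variables jointly, this is harmless, but it should be noted explicitly. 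The contraction estimate in (ii), by contrast, is a routine computation with the heat-kernel bounds once the weighted norm is in place; the only thing to check is the integrability of $\tau^{-1/2}e^{-\kappa\tau}$ near $\tau = 0$, which is immediate.
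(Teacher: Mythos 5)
Your overall route is the same as the paper's (split off the terminal-data term from the source term, use the fundamental-solution bounds, and run the weighted-norm contraction), and your part (ii) is correct: the function-value estimate via $\int_{\mathbb{R}^N}\Gamma(x,t,y,s)\,dy=1$ gives $K/\kappa$, and for the gradient your direct bound $\int_t^T e^{\kappa(t-s)}(s-t)^{-1/2}ds\le\int_0^T\tau^{-1/2}e^{-\kappa\tau}d\tau\to 0$ is a perfectly good (in fact slightly cleaner) substitute for the paper's H\"older-inequality step, which produces a $\kappa^{-1/3}$ factor instead. Likewise, your appeal to interior parabolic (Schauder-type) estimates for the local H\"older continuity of $D_x(\mathcal{T}u)$ is in substance what the paper does by citing (E8)--(E9) of Fleming and Rishel.

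There is, however, a genuine gap in part (i): your boundedness claim for $D_x(\mathcal{T}u)$ rests entirely on the Gaussian bound \eqref{fundamental:in}, and that argument only works for the source term $\int_t^T\int_{\mathbb{R}^N}H(\cdot)\Gamma\,dy\,ds$, where the $(s-t)^{-1/2}$ singularity is integrated out in $s$. For the terminal-data term $v_1(x,t)=\int_{\mathbb{R}^N}\beta(y)\Gamma(x,t,y,T)\,dy$ there is no $s$-integration, and \eqref{fundamental:in} gives only $|D_xv_1(x,t)|\le C\,\|\beta\|_\infty\,(T-t)^{-1/2}$, which blows up as $t\to T$; since $\|\cdot\|_\kappa$ takes the supremum of $|D_xu|$ over all $t<T$ with a weight tending to $1$ at $T$, this does not put $\mathcal{T}u$ in $C_b^{1,0}$, let alone $C_{b,h}^{1,0}$. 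Tellingly, your argument never uses the uniform Lipschitz continuity of $\beta$ from \textbf{A2}, and that hypothesis is exactly what is needed at this point. The paper closes it by writing $v_1(x,t)=\mathbb{E}_{x,t}\,\beta(X_T)$ for the diffusion $dX_t=\sigma(X_t)\,dW_t$ and invoking the standard estimate on the dependence of the flow on its initial point (Friedman, \emph{Stochastic differential equations}, Ch.~5, Lemma~3.3), which shows $v_1$ is Lipschitz in $x$ uniformly in $t$, hence $D_xv_1$ is bounded up to $T$. An equivalent analytic fix: since $\int_{\mathbb{R}^N}\Gamma\,dy=1$ one has $\int_{\mathbb{R}^N}D_x\Gamma\,dy=0$, so $D_xv_1(x,t)=\int_{\mathbb{R}^N}\bigl(\beta(y)-\beta(x)\bigr)D_x\Gamma(x,t,y,T)\,dy$, and the Lipschitz bound on $\beta$ combined with \eqref{fundamental:in} yields a bound on $|D_xv_1|$ independent of $t$. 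With this ingredient added, your proof is complete and essentially coincides with the paper's.
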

\begin{proof}
Suppose that the function $f$ is continuous, bounded and locally H\"{o}lder continuous in $x$ uniformly wrt. $t \in U$, for any compact set $U \subset (0,T)$. We consider first two functions 
\begin{align*}
v_{1}(x,t):&=\int_{\mathbb{R}^{N}} \beta(y) \Gamma(x,t,y,T) dy, \\ v_{2}(x,t):&=\int_{t}^{T} \int_{\mathbb{R}^{N}} \Gamma(x,t,y,s)f(y,s) dy ds.
\end{align*}
Both are bounded and continuous.
Note that from Feynman-Kac formula
\[
v_{1}(x,t)=\et \beta(X_{T}), \quad v_{2}(x,t):= \et \int_{t}^{T} f(X_{s},s) ds,
\]
where $dX_{t} =\sigma(X_{t})dW_{t}$, $\sigma \sigma^{T}=a$ and $\et$ stands for the expected value when the system starts at $(x,t)$.
Standard estimates for diffusion processes (see Friedman  \cite[Chapter 5,Lemma 3.3]{friedman2}) ensure that $v_{1}(x,t)$ is globally Lipschitz continuous in $x$ uniformly with respect to $t$. For
 the function $v_{2}$ we have
\[
D_{x} v_{2}(x,t)= \int_{t}^{T} \int_{\mathbb{R}^{N}} D_{x} \Gamma(x,t,y,s)f(y,s)dy ds,
\]
(see Friedman \cite[Chapter 1, Theorem 3]{F:B}).
From \eqref{fundamental:in} and 
\begin{equation} \label{sugestia}
\int_{\mathbb{R}^{N}}  \left[\frac{c}{ 4 \pi (s-t)} \right]^{\frac{N}{2}} \exp \left[ - \frac{c |x-y|^{2}}{(s-t)}\right] dy =1, \quad s>t, x \in \mathbb{R}^{N},
\end{equation}
we get
\begin{align} \label{festimate}
|D_{x} v_{2}(x,t)| &\leq \int_{t}^{T} \int_{\mathbb{R}^{N}}  |\frac{C}{(s-t)^{(N+1)/2}} \exp \biggl(-c \frac{|x-y|^{2}}{s-t}\biggr)f(y,s)dy ds\\ &\leq C \left[ \frac{4 \pi}{c}\right]^{\frac{N}{2}}\|f\| \int_{t}^{T}\frac{1}{\sqrt{s-t}} ds =2 C \left[ \frac{4 \pi}{c}\right]^{\frac{N}{2}}\|f\| \sqrt{T-t}, \notag
\end{align}
where $\|f\|$ stands for the classical $sup$ norm of the function $f$.
For $u \in C_{b,h}^{1,0}$ we can set $f(x,t):=H(D_{x} u(x,t),u(x,t),x,t)$. We already know that $w:=\mathcal{T} u$ is a classical solution to 
\[
  w_{t}+ \frac{1}{2} Tr( a(x,t)D_{x}^{2}w) + H(D_{x} u(x,t),u(x,t),x,t) =0,  \; (x,t) \in \mathbb{R}^{N} \times (0,T)
\]
with the terminal condition $w(x,T)=\beta(x)$. In particular $D_{x} u$ is Lipschitz continuous on compact subsets of $\mathbb{R}^{N} \times (0,T)$. This fact together with inequality \eqref{festimate} ensure 
that the operator $\mathcal{T}$ maps $C_{b,h}^{1,0}$ into $C_{b,h}^{1,0}$.  
Next two estimates will show that $\mathcal{T}$ is a contraction for  sufficiently large $\kappa>0$. Using 
inequality $\eqref{warH1}$, property $\eqref{111}$ and 
\[
e^{-\kappa (T-t)}\int_{t}^{T} e^{\kappa(T-s)} ds = \frac{1}{\kappa} e^{-\kappa (T-t)} \left[e^{\kappa(T-t)}-1 \right] \leq \frac{1}{\kappa},
\]
 we get
\begin{align*}
&e^{- \kappa (T-t)}\bigl|\mathcal{T}u(x,t)-\mathcal{T}v(x,t)\bigr| \\ & \begin{aligned} \leq    e^{- \kappa (T-t)} K \int_{t}^{T} \int_{\mathbb{R}^{N}} (|u(y,s) -v(y,s)| + |D_{x} u(y,s) - &D_{x} v(y,s)|) \\ & \times \Gamma(x,t,y,s) dy  ds \end{aligned}\\  &\leq e^{- \kappa (T-t)} K  \|u-v\|_{\kappa} \int_{t}^{T}\int_{\mathbb{R}^{N}} \Gamma(x,t,y,s) e^{ \kappa (T-s)}dy ds \leq \frac{K}{\kappa} \|u-v\|_{\kappa}.
\end{align*}
In addition,
\begin{align*}
&e^{- \kappa (T-t)} \bigl|D_{x}(\mathcal{T}u(x,t)-\mathcal{T}v(x,t))\bigr| = \\ & \begin{aligned} \leq  e^{- \kappa (T-t)} K \int_{t}^{T} \int_{\mathbb{R}^{N}} | H(D_{x}u(y,s),u(y,s),y,s)- & H(D_{x}v(y,s),v(y,s),y,s) | \\  & \times |D_{x}\Gamma(x,t,y,s)| dy ds  \end{aligned} \\
& \begin{aligned}\leq e^{- \kappa (T-t)} K \int_{t}^{T}  \int_{\mathbb{R}^{N}}&\bigl(|u(y,s)-v(y,s)|+|
D_{x} u (y,s)- D_{x}v(y,s)|\bigr) \\
& \times \frac{C}{(s-t)^{(N+1)/2}} \exp \biggl(-c \frac{|y-x|^{2}}{s-t}\biggr) dy ds. \end{aligned} 
\end{align*}
Once again, \eqref{sugestia} implies that there exists $\bar{M}>0$ such that 
\begin{align*}
&e^{- \kappa (T-t)} \bigl|D_{x}(\mathcal{T}u(x,t)-\mathcal{T}v(x,t))\bigr|\leq \bar{M} e^{ \kappa t} \|u-v\|_{\kappa} \int_{t}^{T} \frac{e^{-\kappa s}}{\sqrt{s-t}} ds 
\\ 
&\quad \leq \bar{M} e^{ \kappa t} \|u-v\|_{\kappa} \biggl(\int_{t}^{T} (s-t)^{-\frac{3}{4}} ds\biggr)^{\frac{2}{3}} \biggr( \int_{t}^{T} e^{-3\kappa s} ds \biggr)^{\frac{1}{3}}.
\end{align*}
We have
\[ e^{ \kappa t}
\left[\int_{t}^{T} e^{-3\kappa s} ds\right]^{\frac{1}{3}} =e^{ \kappa t}\left[\frac{1}{3 \kappa} \left[e^{-3\kappa t}- e^{-3\kappa T}\right]\right]^{\frac{1}{3}} \leq \frac{1}{\sqrt[3]{3 \kappa}}.
\]
Therefore, there exists a constant $L>0$, dependent only on the time horizon $T$, such that
\[
\sup_{(x,t) \in \mathbb{R}^{N} \times (0,T)} e^{- \kappa (T-t)} \bigl|D_{x}(\mathcal{T}u(x,t)-\mathcal{T}v(x,t))\bigr| \leq \frac{L}{\sqrt[3]{\kappa}} \|u-v\|_{\kappa}.
\]
\end{proof}

\begin{thm} \label{thefirst} Under Assumption \ref{as1}, there exists  a solution  $u \in \mathcal{C}^{2,1}(\mathbb{R}^{N}\times(0,T)) \cap \mathcal{C}(\mathbb{R}^{N}\times(0,T])$ to \eqref{pierwsze} which in addition is bounded together with $D_{x} u$.
\end{thm}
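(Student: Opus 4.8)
The plan is to realize the solution of \eqref{pierwsze} as the unique fixed point of $\mathcal{T}$ and then to upgrade its regularity by a bootstrap. Since $(C_{b}^{1,0},\|\cdot\|_{\kappa})$ is a Banach space and, by the Proposition, $\mathcal{T}$ maps it into itself and is a contraction on it once $\kappa$ is chosen large, the Banach fixed point theorem produces a unique $u\in C_{b}^{1,0}$ with $u=\mathcal{T}u$, that is
\[
u(x,t)=\int_{\re^{N}}\beta(y)\Gamma(x,t,y,T)\,dy+\int_{t}^{T}\int_{\re^{N}}H\bigl(D_{x}u(y,s),u(y,s),y,s\bigr)\Gamma(x,t,y,s)\,dy\,ds=:v_{1}(x,t)+v_{2}(x,t).
\]
By \textbf{A3} and $u\in C_{b}^{1,0}$ the source $f(y,s):=H(D_{x}u(y,s),u(y,s),y,s)$ is bounded and continuous, so $u=v_{1}+v_{2}$ is precisely the candidate bounded solution of the linear Cauchy problem with source $f$ and terminal datum $\beta$.

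The point where work is required is that the linear existence and representation theorem recalled after Assumption \ref{as1} needs the source to be \emph{locally H\"older} in $x$ uniformly in $t$, while at this stage $f$ is only known continuous, since $D_{x}u$ is. So I would first show $u\in C_{b,h}^{1,0}$, i.e. that $D_{x}u$ is locally H\"older in $x$ uniformly in $t$. The idea is to sharpen the derivative estimate already used in the proof of the Proposition: in
\[
D_{x}v_{2}(x,t)-D_{x}v_{2}(x',t)=\int_{t}^{T}\int_{\re^{N}}\bigl(D_{x}\Gamma(x,t,y,s)-D_{x}\Gamma(x',t,y,s)\bigr)f(y,s)\,dy\,ds
\]
one splits the $s$-integral at $s-t=|x-x'|^{2}$, bounds the near-diagonal part by \eqref{fundamental:in} and the remaining part by the companion estimate on $D_{x}^{2}\Gamma$ available from Friedman \cite{friedman}, obtaining $|D_{x}v_{2}(x,t)-D_{x}v_{2}(x',t)|\le C\|f\|_{\infty}|x-x'|^{\gamma}$ for some $\gamma\in(0,1)$, uniformly in $t$; the term $v_{1}$ solves the homogeneous equation, so its $x$-gradient is locally H\"older in $x$ uniformly in $t$ by interior parabolic regularity. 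In other words these estimates show that $\mathcal{T}$ sends all of $C_{b}^{1,0}$ into $C_{b,h}^{1,0}$, hence the fixed point $u=\mathcal{T}u$ itself lies in $C_{b,h}^{1,0}$. This is the step I expect to be the main obstacle: Schauder theory is not directly applicable to the merely continuous source, so the H\"older regularity of $D_{x}u$ has to be squeezed out of the smoothing of the convolution against $\Gamma$, which uses a bound on $D_{x}^{2}\Gamma$ going slightly beyond \eqref{fundamental:in}. A shortcut avoiding this delicate kernel estimate is to invoke interior parabolic $L^{p}$ estimates under \textbf{A1}: $f$ bounded forces $u\in W^{2,1}_{p,\mathrm{loc}}$ for every $p$, hence $D_{x}u\in C^{\gamma}_{\mathrm{loc}}$ by Sobolev embedding.

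Once $u\in C_{b,h}^{1,0}$ is in hand, $u$ is Lipschitz in $x$ (its $x$-gradient is bounded) and $D_{x}u$ is locally H\"older in $x$ uniformly in $t$; combined with \textbf{A3} ($H$ H\"older on compact sets and globally Lipschitz in $(p,u)$), the source $f(y,s)=H(D_{x}u(y,s),u(y,s),y,s)$ is now bounded and locally H\"older in $x$ uniformly in $t$. The linear theory quoted after Assumption \ref{as1} (Friedman \cite{F:B}) then applies to $w_{t}+\frac12 Tr(aD_{x}^{2}w)+f(x,t)=0$, $w(\cdot,T)=\beta$: it has a unique bounded classical solution, and this solution is exactly $v_{1}+v_{2}=\mathcal{T}u=u$. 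Hence $u\in\ctwo$, it is bounded together with $D_{x}u$, and since $f(x,t)=H(D_{x}u(x,t),u(x,t),x,t)$ pointwise, $u$ solves \eqref{pierwsze}, which is the assertion. Everything else is routine: the contraction estimates are already established in the Proposition and this last step is a direct citation of the linear theory.
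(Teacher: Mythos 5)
Your proposal is correct and shares the paper's overall architecture (contraction for $\mathcal{T}$, upgrade of the fixed point to $C_{b,h}^{1,0}$, then the linear theory), but the key regularity step is carried out by a genuinely different mechanism. The paper runs the Picard iteration $u_{n+1}=\mathcal{T}u_{n}$ starting in $C_{b,h}^{1,0}$, so that each iterate is a classical solution of a \emph{linear} equation with bounded, locally H\"older source; it then invokes the interior estimates (E8), (E9) of Fleming--Rishel to get H\"older bounds for $D_{x}u_{n}$ on compact sets that are uniform in $n$ (they depend only on the uniform sup-bounds of $u_{n}$, $D_xu_n$), and passes to the limit --- this is exactly how it circumvents the fact that $C_{b,h}^{1,0}$ is not closed in $\|\cdot\|_{\kappa}$. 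You instead apply Banach's theorem in the complete space $C_{b}^{1,0}$ (legitimate, since the estimates in the Proposition's proof do show $\mathcal{T}$ maps $C_b^{1,0}$ into itself and the contraction bounds use only the global Lipschitz property of $H$), and then prove directly that $\mathcal{T}$ sends $C_{b}^{1,0}$ into $C_{b,h}^{1,0}$, i.e.\ that the potential of a merely bounded continuous source has a locally H\"older spatial gradient. That is true, but it costs you either the Gaussian bound on $D_{x}^{2}\Gamma$ (available in Friedman under {\bf A1}, with the usual splitting at $s-t=|x-x'|^{2}$ giving in fact an $|x-x'|\,|\log|x-x'||$ modulus), or, for the $L^{p}$ shortcut, an extra approximation step: before citing interior $W^{2,1}_{p}$ estimates you must know that $v_{2}$ is a strong solution of the linear equation with source $f$, which for $f$ only bounded and continuous requires mollifying $f$ and passing to the limit in the uniform local $W^{2,1}_{p}$ bounds. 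What your route buys is a cleaner fixed-point argument (complete space, single regularity upgrade at the fixed point); what the paper's route buys is that all estimates are applied only to classical solutions of linear equations with H\"older data, so no kernel estimate beyond \eqref{fundamental:in} and no potential-theoretic lemma for rough sources is needed. Your concluding step (once $u\in C_{b,h}^{1,0}$, the source $H(D_{x}u,u,x,t)$ is bounded and locally H\"older in $x$ uniformly in $t$, so Friedman's linear theorem identifies $\mathcal{T}u=u$ as a bounded classical solution of \eqref{pierwsze}) is exactly the intended, though unstated, final step of the paper's proof.
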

\begin{proof}
Repeating the proof of the Banach Theorem we can take any $u_{1} \in C^{1,0}_{b,h}$ and define the sequence $u_{n+1}=\mathcal{T} u_{n}$, $n \in \mathbb{N}$. Because the operator $\mathcal{T}$ is a contraction in the norm, there exists $\delta \in (0,1)$ such that 
\[
\|u_{n+1}-u_{n}\|_{\kappa} \leq \delta^{n}\|u_{2}-u_{1}\|_{\kappa} ,\quad n\in \mathbb{N}.
\]
Hence,
\[
\|u_{m}-u_{n}\|_{\kappa} \leq \sum_{k=n}^{m-1} \delta^{k}\|u_{2}-u_{1}\|_{\kappa},\quad m>n,
\]
which implies that $u_{n}$ is a Cauchy sequence and consequently it is convergent to $u\in C^{1,0}_{b}$ in the norm $\|\cdot\|_{\kappa}$. The convergence of the norm implies that the sequence $D_{x}u_{n}$ is convergent uniformly to some function $v \in \mathcal{C}(\mathbb{R}^{N} \times [0,T))$. In particular, we have $v=D_{x} u$.  
Moreover, the function $u$ is a fixed point of $\mathcal{T}$.
To complete the reasoning it is sufficient to prove that $u$ belongs also to the class  $C^{1,0}_{b,h}$. Let us first note that the sequence $u_{n}$ is convergent in $\|\cdot\|_{\kappa}$ (for $\kappa$ large enough). Therefore, sequences $u_{n}$ and $D_{x}u_{n}$ are bounded uniformly with respect to $n$. We can now combine (E8), (E9) from Fleming and Rishel \cite[Appendix E]{FlemingRishel} to prove a uniform bound on compact subsets for H\"{o}lder norm of $D_{x} u_{n}$ i. e. for all $k \in \mathbb{N}$ there exist $L_{k} >0$,  $\gamma_{k} \in (0,1]$ such that for all $n \in \mathbb{N}$
\[
|D_{x} u_{n}(x,t) - D_{x} u_{n}(\bar{x},t)|  \leq L_{k} |x-\bar{x}|^{\gamma_{k}}, \quad (x,t),(\bar{x},t) \in B_{k} \times [\delta_{k},t_{k}], 
\]
where $B_{k}=\{x\in \mathbb{R}^{N}|\; |x| \leq k\}$ and $\{\delta_{k}\}_{k \in \mathbb{N}}$ and $\{t_{k}\}_{k \in \mathbb{N}}$ are sequences converging to $0$ and $T$ respectively. Letting $n \to +\infty$ proves  that  $D_{x} u \in  C^{1,0}_{b,h}$. 
\end{proof}

Now we describe the second set of conditions.
\begin{as} \label{as2}
 \text{ }
\begin{itemize}
\item[{\bf B1)}]
The matrix $a_{i,j}(x,t)$ is Lipschitz continuous on compact subsets in $\mathbb{R}^{N} \times [0,T]$. In addition, there exists a constant $\mu>0$ such that for any  $\xi \in \mathbb{R}^{N} $
\[ 
\sum_{i,j=1}^{N} a_{i,j}(x,t) \xi_i \xi_{j} \geq \mu |\xi|^{2}, \quad (x,t) \in \mathbb{R}^{N} \times [0,T].
 \]

\item[{\bf B2)}] The function $H$ is H\"{o}lder continuous in compact subsets of $\mathbb{R}^{2 N+1}\times[0,T]$. Moreover, there exist $K>0$ and the set $\{K_{m,n}>0:\, m,n \in \mathbb{N}\}$ such that for all $x,\bar{x},p,\bar{p} \in \mathbb{R}^{N}$, $u, \bar{u} \in \mathbb{R}$, $t \in [0,T]$: 
\begin{align}
&|H(0,0,x,t)| \leq K,  \label{b21}\\
&H(0,u,x,t)-H(0,\bar{u},x,t) \leq K (u-\bar{u}) \; \text{  if  } \; u > \bar{u}, \label{b22} \\
&|H(p,u,x,t)-H(p,\bar{u},x,t)| \leq K_{m,n} |u-\bar{u}| \; \text{ if } \; |u|,|\bar{u}| \leq m, \; |x| \leq n, \label{b23} \\
&|H(0,u,x,t)| \leq K_{m,n},  \; \text{  if  } \; |u| \leq m, \; |x| \leq n, \label{boundu} \\
&|H(p,u,x,t)-H(\bar{p},u,x,t)| \leq K_{m,n}  |p-\bar{p}| \; \text{  if  } \; |u| \leq m, \; |x| \leq n. \label{lipschitz}
\end{align}
\item[{\bf B3)}] The function $\beta$ is bounded and Lipschitz continuous on compact subsets of $\mathbb{R}^{N}$.
\end{itemize}
\end{as}

\begin{thm} \label{main}
Under Assumption \ref{as2} there exists a bounded solution $u \in \mathcal{C}^{2,1}(\mathbb{R}^{N}\times[0,T)) \cap \mathcal{C}(\mathbb{R}^{N}\times[0,T])$, \eqref{pierwsze}.
\end{thm}

\begin{proof} Note that for $\varepsilon >0$ we can define the function $a$ and $H$ also for $t \in [-\varepsilon, T]$
 by the formula
\begin{align*}
&a(x,t):=a(x,0) \\  &H(p,u,x,t):=H(p,u,x,0), \quad  t \in [-\varepsilon,0),\; (p,u,x) \in \mathbb{R}^{2N+1}.
\end{align*}
It is useful to notice that $H(p_{N},p_{N-1},\ldots,p_{1},u,x,t)$ can be written as
\begin{align} \label{rewrite}
H(p_{N},p_{N-1},\ldots,p_{1},u,x,t)=&\sum_{i=1}^{N} \dfrac{[H^{i}(p_{i},u,x,t)- H^{i-1}(p_{i-1},u,x,t) ]}{p_{i}} p_{i} \\&+ \frac{[H(0,u,x,t)-H(0,0,x,t)]}{u} u + H(0,0,x,t), \notag
\end{align}
where $H^{i}(p_{i},u,x,t):=H(0,\ldots,0,p_{i},\ldots,p_{2},p_{1},u,x,t)$.
Consider now a new Hamiltonian of the form
\[
H_{k,m,l}(p,u,x,t):=\xi_{k}^{1}(x)\xi_{m}^{2}(u)\xi_{l}^{3}(p) H(p,u,x,t),\quad  k,m,l \in \mathbb{N},
\]
where 
\[
\xi_{k}^{1}(x):=\begin{cases}
1 \quad  & \text{if} \quad |x| \leq k,\\
 \left(2-\frac{|x|}{k}\right), & \text{if} \quad k \leq |x| \leq 2k,\\
  0     &\text{if} \quad  |x| \geq 2k,  \end{cases}
  \]
  \[
\xi_{m}^{2}(u):=\begin{cases}
1 \quad  & \text{if} \quad |u| \leq m,\\
 \left(2-\frac{|u|}{m}\right), & \text{if} \quad m \leq |u| \leq 2m,\\
  0     &\text{if} \quad  |u| \geq 2m,  \end{cases}
  \]
  \[
\xi_{l}^{3}(p):=\begin{cases}
1 \quad  & \text{if} \quad |p| \leq l,\\
 \left(2-\frac{|p|}{l}\right), & \text{if} \quad l \leq |p| \leq 2l,\\
  0     &\text{if} \quad  |u| \geq 2l.  \end{cases}
  \]
We can notice that for a fixed compact  set $B \subset \mathbb{R}^{2N+1} \times [-\varepsilon,T]$ there exists a collection of sufficiently large indices such that
\[H_{k,m,l}(p.u,x,t)=H(p,u,x,t), \quad (p,u,x,t) \in B.
\]
Moreover, for fixed $k,m,l \in \mathbb{N}$ there exists $L(k,m,l)>0$ such that for all
$(p,u,x,t),\; (\bar{p},\bar{u},x,t) \in \mathbb{R}^{2N+1} \times [0,T]$ we have
\begin{equation*}
\begin{aligned}
&|H_{k,m,l}(p,u,x,t)| \leq L(k,l,m)\bigl(1+|p|+|u|\bigr), \\
&|H_{k,m,l}(p,u,x,t)-H_{k,m,l}(\bar{p},\bar{u},x,t)| \leq L(k,l,m) \bigl(|u-\bar{u}| + |p - \bar{p}|\bigr). 
\end{aligned}
\end{equation*}
Therefore,  Theorem \ref{thefirst} can be used within the Hamiltonian $H_{k,m,l}$.
Suppose that $\sigma$ is the unique positive definite square root of $a$. By Friedman \cite[Chapter 6, Lemma 1.1]{friedman2}, $\sigma$ is Lipschitz continuous on compact subsets of $\mathbb{R}^{N} \times [0,T]$. Define
\[
\sigma_{k}(x,t):=\begin{cases}
\sigma(x,t) \quad  & \text{if} \quad |x| \leq k,\\
 \sigma(\frac{kx}{|x|},t), & \text{if} \quad  |x| > k,  \end{cases}, \quad a_{k}:=\sigma_{k} \sigma_{k}^{T}, \quad  \beta_{k}(x):=\xi_{k}^{1}(x) \beta(x).
\]
This yields that there exists $u_{k,m,l} \in \mathcal{C}^{2,1}(\mathbb{R}^{N} \times (-\varepsilon,T)) \cap \mathcal{C}(\mathbb{R}^{N} \times (-\varepsilon,T])$, a bounded solution to 
\begin{align*} 
&u_{t}+\tfrac12\Tr(a_{k}(x,t)D_{x}^{2}u)+ H_{k,m,l}(D_{x}u,u,x,t)=0, &\quad (x,t) \in \mathbb{R}^{N} \times (-\varepsilon,T), \\
&u(x,T)=\beta_{k}(x), &x \in \mathbb{R}^{N}.
\end{align*}
Our reasoning here is based on Arzela - Ascoli's Lemma, so we need to prove some bounds for derivatives of  $u_{k,m,l}$.
Taking advantage of \eqref{rewrite} we can find Borel measurable functions $b_{k,m,l}$, $h_{k,m,l}$ and $f_{k,m,l}$  such that the function $u_{k,m,l}$ is a solution to 
\[
u_{t} + \tfrac12\Tr(a_{k}(x,t)D^{2}_{x}u) + b_{k,m,l}(x,t)D_{x} u + h_{k,m,l}(x,t)u+ f_{k,m,l}(x,t)=0
\]
with the terminal condition $u(x,T)=\beta_{k}(x)$.
Namely, let 
\[f_{k,m,l}(x,t):=H_{k,m,l}(0,0,x,t),\] 

\begin{multline*}
b^{i}_{k,m,l}(x,t) \\:= 
\dfrac{[H^{i}_{k,m,l}(u_{x_{i}}(x,t),u(x,t),x,t)- H^{i-1}_{k,m,l}(u_{x_{i-1}}(x,t),u(x,t),x,t) ]}{ u_{x_{i}}(x,t)},
\end{multline*}
(if   $u_{x_{i}}(x,t) \neq 0$ and $0$ otherwise),
\[
h_{k,m,l}(x,t):=
\begin{cases}
\frac{[H_{k,m,l}(0,u(x,t),x,t)-H_{k,m,l}(0,0,x,t)]}{u(x,t)}, \quad &u(x,t) \neq 0\\
0 , \quad &u(x,t)=0.
\end{cases}
\]
Conditions \eqref{b21} and \eqref{b22} imply
\[
  h_{k,m,l}(x,t)\leq K, \quad |f_{k,m,l}(x,t)|\leq K,
\]
for all $k,m,l \in \mathbb{N}$,  $(x,t) \in \mathbb{R}^{N} \times (-\varepsilon,T]$.
We can now use the standard Feynman - Kac type theorem to obtain stochastic representation of the form:
\begin{multline*}
u_{k,m,l}(x,t) \\ =  \mathbb{E}_{x,t} \left[\int_{t}^{T}e^{\int_{t}^{s}h_{k,m,l}(X_{l},l)\, dl} f_{k,m,l}(X_{s},s) ds  + e^{\int_{t}^{T}h_{k,m,l}(X_{l},l)\, dl}\beta_{k}(X_{T})\right],
\end{multline*}
 where  $dX_{t}=b_{k,m,l}(X_{t},t)dt + \sigma_{k}(X_{t},t)dW_{t}$, $\sigma_{k} \sigma_{k}^{T}=a_{k}$.
The existence of the  strong solution to this stochastic differential equation was proved by Veretennikov \cite{veretennikov}. 
Since functions $\beta$, $f_{k,m,l}$ are bounded and $h_{k,m,l}$ is bounded above, then there exists $m^{*}>0$ independent of $k$ and $m$ such that
\[
|u_{k,m,l}(x,t)| \leq m^{*}. 
\]
This indicates that $u_{k,l}(x,t):=u_{k,m^{*},l}(x,t)$ is a solution to
\begin{equation*} 
 \begin{cases}
u_{t}+\tfrac12\Tr(a_{k}(x,t)D_{x}^{2}u)+ H_{k,l}(D_{x}u,u,x,t)=0, &\quad (x,t) \in \mathbb{R}^{N} \times (-\varepsilon,T), \\
u(x,T)=\beta(x), &\quad x \in \mathbb{R}^{N}, 
\end{cases}
\end{equation*}
where $H_{k,l}(p,u,x,t):= \xi_{k}^{1}(x)  \xi_{l}^{3}(p) H(p,u,x,t)$.
Repeating the procedure described above, we can find Borel measurable functions $b_{k,l}$, $h_{k,l}$ and $f_{k,l}$  such that the function $u_{k,l}$ is a solution to 
\[
u_{t} + \tfrac12\Tr(a_{k}(x,t)D^{2}_{x}u) + b_{k,l}(x,t)D_{x} u + h_{k,l}(x,t)u+ f_{k,l}(x,t)=0
\]
with the terminal condition  $u(x,T)=\beta_{k}(x)$.
We still have 
\[
   h_{k,l}(x,t)\leq K,  \quad |f_{k,l}(x,t)|\leq K 
\]
and 
\[
 |h_{k,l}(x,t)|\leq K_{m^{*},n}, \quad (x,t) \in B_{n} \times [0,T].
\]
We still  need a bound which is independent of  $k,l$. To apply Arzela- Ascoli's Lemma it is sufficient for us to prove such bound for each set $B_{n} \times [-\delta_{n},t_{n}]$, where $B_{n}= \{ x \in \mathbb{R}^{N} |\; |x| \leq n\}$ and $\delta_{n}$ and $t_{n}$ are sequences converging to $\varepsilon$ and $T$ respectively. To get the estimates, we first consider  any function $\varphi$ which satisfies the uniform Lipschitz condition with the constant $L>0$:
\[
|\varphi(z)- \varphi(\bar{z})| \leq L|z-\bar{z}|,   \quad z, \bar{z} \in \mathbb{R}^{N}.
\]
The Lipschitz condition implies the linear growth condition 
\[
|\varphi(z)| \leq L|z| + |\varphi(0)|, \quad z \in \mathbb{R}^{N}.
\]
Next, we need to estimate $|\xi_{l}^{3}(z)\varphi(z)- \xi_{l}^{3}(\bar{z})\varphi(\bar{z})|$ for $ z, \bar{z} \in \mathbb{R}^{N}$. We can assume that $|z| \leq 2l$ or $|\bar{z}| \leq 2l$. Otherwise $|\xi_{l}^{3}(z)\varphi(z)- \xi_{l}^{3}(\bar{z})\varphi(\bar{z})|=0$.
 Without the loss of generality we can assume that $|\bar{z}| \leq 2l$. We have
 \begin{align*}
|\xi_{l}^{3}(z)\varphi(z)- \xi_{l}^{3}(\bar{z})\varphi(\bar{z})| &\leq |\xi_{l}^{3}(z)||
\varphi(z) - \varphi(\bar{z})|+ |\varphi(\bar{z})||\xi_{l}^{3}(z)- \xi_{l}^{3}(\bar{z})| \\\quad &\leq L|z-\bar{z}| +  \left(2lL + |\varphi(0)|\right)|\xi_{l}^{3}(z)- \xi_{l}^{3}(\bar{z})| \\ &\leq
\left[L+\frac{1}{l}\left(2lL+|\varphi(0)| \right)\right]|z-\bar{z}|, \quad z, \bar{z} \in \mathbb{R}^{N}. 
\end{align*}
Therefore, using additionally \eqref{boundu} and \eqref{lipschitz}, we get
\begin{align*}
&\left|H^{i}_{k,l}(u_{x_{i}}(x,t),u(x,t),x,t)- H^{i-1}_{k,l}(u_{x_{i-1}}(x,t),u(x,t),x,t)\right|\\
&  \leq \left[\frac{1}{l}\left(2lK_{m^{*},n}+|H(0,u,x,t)| \right)+K_{m^{*},n}\right] |u_{x_{i}}(x,t)| \\& \leq  \left[\frac{1}{l}\left(2lK_{m^{*},n}+K_{m^{*},n} \right)+K_{m^{*},n}\right] |u_{x_{i}}(x,t)|, \quad (x,t) \in B_{n} \times [0,T], \; k >n.
\end{align*}
This implies that the coefficient $b_{k,l}$ is uniformly bounded on the set $B_{n} \times[-\delta_{n},t_{n}]$ for sufficiently large $l$.    

So far we have obtained uniform bounds for $b_{k,l}$, $h_{k,l}$, $f_{k,l}$ on $B_{n} \times [-\delta_{n},t_{n}]$. To find bounds for $u_{k,l}$, $(u_{k,l})_{t}$, $D_{x}u_{k,l}$,  $D^{2}_{x} u_{k,l}$ and their H\"{o}lder norms uniformly on every set $B_{n} \times [0,t_{n}]$ we use the following reasoning:
\begin{enumerate}
\item Use Lieberman [13, Th. 7.20, Th. 7.22] to get uniform  bounds for $L^{p}(B_{n}\times [-\delta_{n},t_{n}]) $ norms of $u_{k,l}$, $(u_{k,l})_t$, $D_{x}u_{k,l}$, $D^2_{x}u_{k,l}$. For the more general and more readable result it is worth to see Crandall et al. \cite[Theorem 9.1]{Crandall}.
\item Use Fleming and Rishel \cite[Appendix E, E9]{FlemingRishel} to get uniform classical bound for $u_{k,l}$, $D_{x} u_{k,l}$ and their H\"{o}lder norms on the set $B_{n}\times [-\delta_{n},t_{n}]$. 
\item We use bounds for $u_{k,l}$ and $D_{x} u_{k,l}$ to ensure   that for fixed $n \in \mathbb{N}$ and for sufficiently large indexes $k,l$ we have
\[
H_{k,l}(D_{x}u_{k,l}(x,t),u_{k,l}(x,t),x,t)= H(D_{x}u_{k,l}(x,t),u_{k,l}(x,t),x,t), 
\]
for $(x,t) \in B_{n} \times [-\delta_{n},t_{n}]$. 
\item We can use this fact to obtain the uniform bound on the H\"{o}lder norm on the set $B_{n} \times [-\delta_{n},t_{n}]$ for the family $H_{k,l}(D_{x}u_{k,l}(x,t),u_{k,l}(x,t),x,t)$.
\item We already know that $u_{k,l}$ is a classical solution to the problem
\begin{equation*} 
u_{t}+\tfrac12\Tr(a_{k}(x,t)D_{x}^{2}u)+ H_{k,l}(D_{x}u_{k,l},u_{k,l},x,t)=0, \quad (x,t) \in B_{n} \times [-\delta_{n},t_{n}]. \\ 
\end{equation*}
\item Now, it is sufficient to  apply Fleming and Rishel \cite[Appendix E, E10]{FlemingRishel} (which is in fact due to Ladyzhenskaja et al. \cite[Chapter IV, Theorem 10.1]{Lady}) and get uniform classical bounds for the remaining derivatives and their H\"{o}lder norms. 
\end{enumerate} 
\item The bounds for the derivatives ensure that $u_{k,l}$, $(u_{k,l})_{t}$, $D_{x}u_{k,l}$,  $D^{2}_{x} u_{k,l}$ are uniformly bounded, while bounds for the H\"{o}lder norms ensure equicontinuity of $u_{k,l}$, $(u_{k,l})_{t}$ $D_{x}u_{k,l}$,  $D^{2}_{x} u_{k,l}$ on the set
$B_{n} \times [0,t_{n}]$. Thus,
we can use the Arzela - Ascoli's Lemma on each set $B_{n} \times [0,t_{n}]$  to deduce that for each given sequence $(k_{n},l_{n}, n \in \mathbb{N})$ there exists a subsequence $(k_{n_{\mu}},l_{n_{\mu}}, \mu \in \mathbb{N})$ such that  sequences $(u_{k_{n_{\mu}},l_{n_{\mu}}},\; \mu \in \mathbb{N})$,  $((u_{k_{n_{\mu}},l_{n_{\mu}}})_{t},\; \mu \in \mathbb{N})$, $(D_{x} u_{k_{n_{\mu}},l_{n_{\mu}}},\; \mu \in \mathbb{N})$, $( D^{2}_{x}u_{k_{n_{\mu}},l_{n_{\mu}}},\; \mu \in \mathbb{N})$ are convergent uniformly on
$B_{n} \times [0,t_{n}]$. By the standard diagonal argument, there exists a sequence $(k_{n_{\mu}},l_{n_{\mu}}, \mu \in \mathbb{N})$ such that $(u_{k_{n_{\mu}},l_{n_{\mu}}},\; \mu \in \mathbb{N})$ is convergent locally uniformly together with suitable derivatives to a function $u \in C^{2,1}(\mathbb{R}^{N} \times [0,T))$.

Now, we need only to prove that $u$ is continuous at the boundary $\mathbb{R}^{N} \times \{T\}$. Let us apply the It\^o rule to the function $u_{k,l}$  and the stochastic system  $dX_{t}(k)=\sigma_{k}(X_t(k),t)dW_{t}$, and write
\begin{multline*}
\mathbb{E}_{x,t}^{k,l} u_{k,l}(X_{T \wedge \tau_{k}(x,t)}(k),T \wedge \tau_{k}(x,t)) =u_{k,l}(x,t) \\ + \mathbb{E}_{x,t}^{k,l}\int_{t}^{T \wedge \tau_{k}(x,t)}  \left[ -h_{k,l}(X_{s}(k),s) u_{k,l}(X_{s}(k),s)-f_{k,l}(X_{s}(k),s) \right]ds,
\end{multline*}
where $\tau_{k}(x,t)= \inf \{ s\geq t : X_{s}(k) (x,t) \not\in B\}$ for a sufficiently large 
closed ball $B$. The symbol $\mathbb{E}_{x,t}^{k,l}$ is used to denote the expected value under the measure given by the Girsanov transform
\[
\frac{dQ^{k,l}}{dP}:=Z_{x,t,T}^{k,l}
:= e^{\int_{t}^{T \wedge \tau_{k}(x,t)} \sigma_{k}^{-1} b_{k,l}(X_{s}(k),s) dW_{s} -\frac{1}{2}\int_{t}^{T \wedge \tau_{k}(x,t)} |\sigma_{k}^{-1} b_{k,l}(X_{s}(k),s)|^{2} ds} .
\]
 Note that the definition of $\tau$ does not depend on $k$ because there exists $k_{0} \in \mathbb{N}$ such that for all $k \geq k_{0}$ we have $B \subset B_{k}$ and consequently if $k,l \geq k_{0}$ then by Friedman \cite[Theorem 2.1, Section 5] {friedman2} we get $P(\tau_{k}(x,t)=\tau_{l}(x,t))=1$ and $P(\sup_{t \leq s \leq \tau_{k}(x,t)} |X_{s}(k)-X_{s}(l)|=0)=1$. Therefore, we will further omit the variable $k$ in the notation for the process $X$ and the stopping time $\tau(x,t)$. Up to random time $\tau(x,t)$  the process $X$ takes its values in $B$ and the coefficients $b_{k,l}$, $h_{k,l}$, $f_{k,l}$ are uniformly bounded on the set $B \times [0,T]$. 
Let us take any  $(x,t) \in B \times [0,T]$. And let $B$ be a closed ball such that $\bar{x} \in \operatorname{Int} B$, $(x,t) \in \operatorname{Int} B \times [0,T]$. Then,
\begin{multline*}
|u_{k,l}(x,t)-\beta(\bar{x})| \leq \left|\mathbb{E}_{x,t} Z_{x,t,T}^{k,l} u_{k,l}(X_{T \wedge \tau(x,t)},T \wedge \tau(x,t)) - \beta(\bar{x}) \right| \\ +  \mathbb{E}_{x,t}Z_{x,t,T}^{k,l}\int_{t}^{T \wedge \tau(x,t)}  \left| h_{k,l}(X_{s}(k),s) u_{k,l}(X_{s},s) + f_{k,l}(X_{s},s) \right| ds.
\end{multline*}
Furthermore,
\begin{align*}
&\left|\mathbb{E}_{x,t} Z_{x,t,T}^{k,l} u_{k,l}(X_{T \wedge \tau(x,t)},T \wedge \tau(x,t)) - \beta(\bar{x}) \right|\\ & \quad \leq \mathbb{E}_{x,t} Z_{x,t,T}^{k,l} | u_{k,l}(X_{T \wedge \tau(x,t)},T \wedge \tau(x,t)) - \beta(\bar{x})|\\ & \quad \leq \sqrt{\mathbb{E}_{x,t} [Z_{x,t,T}^{k,l}]^{2} } \sqrt{\mathbb{E}_{x,t}| u_{k,l}(X_{T \wedge \tau(x,t)},T \wedge \tau(x,t)) - \beta(\bar{x})|^{2} }.
\end{align*}
The random variable $\left[Z_{x,t,T}^{k,l}\right]^{2}$ can be rewritten as a product of the Girsanov exponent and a uniformly bounded random variable.
In addition, 
\begin{align*}
&\mathbb{E}_{x,t}| u_{k,l}(X_{T \wedge \tau(x,t)},T \wedge \tau(x,t))- \beta(\bar{x})|^{2} \\ & =  \mathbb{E}_{x,t}|\beta (X_{T\wedge \tau(x,t)})- \beta(\bar{x})|^{2} \chi_{\{\sup_{t \leq s \leq T \wedge \tau(x,t)} |X_{s}| < R_{B}\}
} \\ 
&+ \mathbb{E}_{x,t}|u_{k,l} (X_{T \wedge \tau(x,t)},T \wedge \tau(x,t))- \beta(\bar{x})|^{2} \chi_{\{\sup_{t \leq s \leq T \wedge \tau(x,t)}|X_{s}| \geq R_{B}\}}=:I_{1}+I_{2},
\end{align*}
where $R_{B}$ denotes the radius of the ball $B$.
The expression $I_{1}$ is independent of $k,l$ for $k,l \geq k_{0}$ and by the standard diffusion estimates is convergent to $0$ if $(x,t) \to (\bar{x},T)$. The same property holds as well for the second part $I_{2}$  because $|u_{k,l}(x,t)| \leq m^{*}$ and by the martingale inequalities we have
\begin{multline*}
P_{x,t}(\sup_{t \leq s \leq T \wedge \tau(x,t)} |X_{s}| \geq R_{B})  \\ \leq P_{x,t}\left(\sup_{t \leq s \leq T \wedge \tau(x,t)} \left|\int_{t}^{s} \sigma(X_{r}) dW_{r} \right| \geq R_{B} -|x|\right) \\ \quad \leq \frac{1}{ R_{B} -|x|} \mathbb{E}_{x,t} \left| \int_{t}^{T \wedge \tau(x,t)} \sigma(X_{r},r) dW_{r} \right|.
\end{multline*}
Additionally,  by the It\^{o} isometry and the Cauchy -- Schwarz inequality, we have
\begin{multline*}
\mathbb{E}_{x,t} \left| \int_{t}^{T \wedge \tau(x,t)} \sigma(X_{r},r) dW_{r} \right| \\ \leq \left[\mathbb{E}_{x,t} \left| \int_{t}^{T \wedge \tau(x,t)}Tr(\sigma(X_{r},r) \sigma(X_{r},r))dr \right|\right]^{\frac{1}{2}} 
 \to 0, \quad  (\text{if} \; t \to T).
 \end{multline*}
 As a consequence the expression $\left|u_{k,l}(x,t) - \beta(\bar{x})\right|$
admits an estimate which is independent of $k,l$ and converges to $0$ if $(x,t) \to (\bar{x},T)$. Thus, the same property holds true also for $
\left|u(x,t) - \beta(\bar{x})\right| $.
This implies continuity of the function $u$. 
\end{proof}

As it was mentioned in the Introduction, our result can be applied to models with state dependent bounds for the control set. Let us consider the following example describing a variant to the optimal dividend payments problem, which is one of the most important actuarial control problems.
 Let us define the insurer surplus process:
\[
dX_{t} = [\mu-d_{t}] dt+ \sigma dW_{t},
\]
where $\mu, \sigma \in \mathbb{R}$, $\sigma \neq 0$ and $W$ is an one-dimensional Brownian motion. 
The progressively measurable process $d_{t}$ is used to denote the dividend payment intensity. We assume here, that $d_{t}$ can not exceed some fraction of the surplus process and there is no payment at all when the surplus is negative. So, we should always have $0 \leq d_{t} \leq \kappa X_{t}^{+}$. The problem of the insurer is to maximize overall discounted utility of dividend payments, i.e. 
\[
\mathbb{E}_{x,t} \int_{t}^{T} e^{-w(k-t)} f(d_{k}) dk, 
\]
where the function $f$ can be considered as a utility function and $w$ as a discount rate. In the formulation of the problem we can use  as well some penalty function to penalize the objective for allowing the surplus to be negative, but it is not crucial to our analysis. The HJB equation for this problem looks as follows
\[
u_{t} + \frac{1}{2} \sigma^{2} D^{2}_{x} u  + \max_{0 \leq d \leq \kappa x^{+}} \left[(\mu-d) D_{x} u + f(d)\right] - w u =0, \quad u(x,T) =0. 
\]
In this case
\begin{equation} \label{example}
H(p,u,x,t)=\max_{0 \leq d \leq \kappa x^{+}} \left[(\mu-d) p + f(d)-wu\right].
\end{equation} 

More generally we assume that 
\begin{equation} \label{example2}
H(p,u,x,t)=\max_{0 \leq d \leq  m(x,t)} h(p,u,x,t,d).
\end{equation} 

\begin{prop}
Let the function $h$ be Lipschitz continuous on compact subsets of $\mathbb{R}^{3} \times [0,T] \times \mathbb{R}$, satisfy conditions \eqref{b21} and \eqref{b22} uniformly with respect to $d \in \mathbb{R}$ and conditions \eqref{b23}, \eqref{boundu}, \eqref{lipschitz} uniformly with respect to $d \in U$ for all compacts $U \subset \mathbb{R}$ and let the function $m$ be Lipschitz continuous on compact subsets of $\mathbb{R} \times [0,T]$. Then the function $H$, given by \eqref{example2}, satisfies condition  B2.
\end{prop}

\begin{proof}
Almost all conditions contained in ${\bf B2}$ concerning variables $p$ and $u$  are trivial or very easy to prove just using the inequality
\begin{multline*}
|\max_{0 \leq d \leq  m(x,t)} h(p,u,x,t,d) - \max_{0 \leq d \leq m(x,t)} h(\bar{p},\bar{u},x,t,d)| \\ \leq \max_{0 \leq d \leq  m(x,t)}| h(p,u,x,t,d)-h(\bar{p},\bar{u},x,t,d)|.
\end{multline*}

 Local Lipschitz continuity  in $(x,t)$ is much harder to prove. For fixed $(\bar{p},\bar{u}) \in \mathbb{R}^{N+1}$ we have
\begin{multline*}
|\max_{0 \leq d \leq  m(x,t)} h(\bar{p},\bar{u},x,t,d) - \max_{0 \leq d \leq  m(\bar{x},\bar{t})} h(\bar{p},\bar{u},\bar{x},\bar{t},d)| \\ \leq | \max_{0 \leq d \leq  m(x,t)} h(\bar{p},\bar{u},x,t,d) - \max_{0 \leq d \leq  m(x,t)} h(\bar{p},\bar{u},\bar{x},\bar{t},d) | \\+
| \max_{0 \leq d \leq m(x,t)} h(\bar{p},\bar{u},\bar{x},\bar{t},d) - \max_{0 \leq d \leq  m(\bar{x},\bar{t})} h(\bar{p},\bar{u},\bar{x},\bar{t},d)|.
\end{multline*}
The first expression on the right hand side can be estimated using the assumed local Lipschitz continuity i.e. for a given compact set $B \subset \mathbb{R}^{3} \times [0,T]$ there exists $L_{B}>0$ such that for all $(\bar{p},\bar{u},x,t), (\bar{p},\bar{u},\bar{x},\bar{t}) \in B$
\begin{multline}
| \max_{0 \leq d \leq  m(x,t)} h(\bar{p},\bar{u},x,t,d) - \max_{0 \leq d \leq  m(x,t)} h(\bar{p},\bar{u},\bar{x},\bar{t},d) |\\ \leq  \max_{0 \leq d \leq  m(x,t)} | h(\bar{p},\bar{u},x,t,d) - h(\bar{p},\bar{u},\bar{x},\bar{t},d)|  \\ \leq  L_{B} \left( |x-\bar{x}| + |t-\bar{t}| \right). 
\end{multline}

To estimate the second expression we will consider three cases.

{\bf Case I} The maximum  of $\ h(\bar{p},\bar{u},\bar{x},\bar{t},d)$ over $[0 ,m(\bar{x},\bar{t})]$ is attained at some point $ d^{*} \neq m(\bar{x},\bar{t})$, and then by local Lipschitz continuity of $m$ and $h$ we can find sufficiently small neighbourhood of $(\bar{x},\bar{t})$ such that the maximum of $h(\bar{p},\bar{u},\bar{x},\bar{t},d)$ over $0 \leq d \leq m(x,t)$ is still attained at $d^{*}$ . In that case
\[
| \max_{0 \leq d \leq  m(x,t)} h(\bar{p},\bar{u},\bar{x},\bar{t},d) - \max_{0 \leq d \leq  m(\bar{x},\bar{t})} h(\bar{p},\bar{u},\bar{x},\bar{t},d) | =  0.
\]

{\bf Case II} The maximum of $h(\bar{p},\bar{u},\bar{x},\bar{t},d)$ is attained at $d^{*}=m(\bar{x},\bar{t})$ and $m(x,t) <m (\bar{x},\bar{t})$. Then, there still exists 
neighbourhood such that  $\max_{0 \leq d \leq  m(x,t)} h(\bar{p},\bar{u},\bar{x},\bar{t},d)=h(\bar{p},\bar{u},\bar{x},\bar{t},d^{*})$.

{\bf Case III}  The maximum of $h(\bar{p},\bar{u},\bar{x},\bar{t},d)$ is attained at $d^{*}=m(\bar{x},\bar{t})$ and $m(x,t) > m (\bar{x},\bar{t})$. Then, the maximum over $[0,m(x,t)]$ is attained at $\hat{d} \in [m (\bar{x},\bar{t}),m(x,t)]$. In that case we have
\begin{multline*}
|  \max_{0 \leq d \leq  m(\bar{x},\bar{t})} h(\bar{p},\bar{u},\bar{x},\bar{t},d) - \max_{0 \leq d \leq  m(x,t)} h(\bar{p},\bar{u},\bar{x},\bar{t},d) |  \\ = 
| h(\bar{p},\bar{u},\bar{x},\bar{t},d^{*}) -  h(\bar{p},\bar{u},\bar{x},\bar{t},\hat{d}) | .
\end{multline*}
The function $h$ is Lipschitz continuous on compact subsets of $\mathbb{R}^{3} \times [0,T] \times \mathbb{R}$, so for every compact set $B \subset \mathbb{R}^{3} \times [0,T]$ there exists $L>0$ such that for all $(\bar{p},\bar{u},\bar{x},\bar{t}) \in B$
\[
| h(\bar{p},\bar{u},\bar{x},\bar{t},d^{*}) -  h(\bar{p},\bar{u},\bar{x},\bar{t},\hat{d}) | \leq  L |d^{*}-\hat{d}|    \leq 
L|m(\bar{x},\bar{t})-m(x,t)|.
\]
This part can be completed by the assumed local Lipschitz continuity of the function  $m$.

Collecting all inequalities together, we ensure that for any compact set $B \subset \mathbb{R}^{3} \times [0,T]$ there exists a constant $L>0$ such that for any $(\bar{p},\bar{u},\bar{x},\bar{t}) \in B$ there is a small neighbourhood $U_{(\bar{p},\bar{u},\bar{x},\bar{t})}$ such that
\begin{multline} \label{holds}
| \max_{0 \leq d \leq  m(x,t)} h(p,u,x,t,d) - \max_{0 \leq d \leq  m(\bar{x},\bar{t})} h(\bar{p},\bar{u},\bar{x},\bar{t},d) | \\ \leq L|(p,u,x,t)-(\bar{p},\bar{u},\bar{x},\bar{t})|,  
\end{multline}
for all $(p,u,x,t,d) \in U_{(\bar{p},\bar{u},\bar{x},\bar{t})}$. The fact that the constant $L>0$ depends only on the compact set $B$ and not on the particular choice of the point $(\bar{p},\bar{u},\bar{x},\bar{t})$ implies  local Lipschitz continuity of the function $H$. Namely, let $B \subset \mathbb{R}^{3} \times [0,T]$ be a compact and convex set of the form $\{x \in \mathbb{R}^{3}| \; |x| \leq R\} \times [0,T]$. Fix $z=(p,u,x,t),\bar{z}=(\bar{p},\bar{u},\bar{x},\bar{t}) \in B$ and consider the compact set (the line connecting $z$ and $\bar{z}$)
\[
O_{[z,\bar{z}]}=\{z+ \alpha(\bar{z}-z), \alpha \in [0,1]\} \subset B.
\]
For each point $z \in B$ there exists $U_{z}$ (we may assume this is an open ball) such that \eqref{holds} holds. Compactness of the set $O_{[z,\bar{z}]}$ implies that there exist finitely many points $z=z_{1},z_{2},\ldots,z_{n}=\bar{z} \in O_{[z,\bar{z}]} $ (we can order them according to the increasing euclidean distance from the point $z$) such that for every ordered pair $z_{i},z_{i+1}$ we have $ z_{i},z_{i+1} \in \bar{U}_{z_{i}}$ or $ z_{i},z_{i+1} \in \bar{U}_{z_{i+1}}$. In that case we have
\[
|H(z)-H(\bar{z})| \leq \sum_{i=2}^{n}|H(z_{i})-H(z_{i-1})| \leq L \sum_{i=2}^{n}|z_{i}-z_{i-1}| = L|z-\bar{z}|.
\]

\end{proof}
\section{Isaacs equation}

Now, our primary concern is to solve the following semilinear equation
\begin{multline}  \label{equationL}  u_{t}+ \tfrac12\Tr( a(x,t)D^{2}_{x} u)  \\ +\max_{\delta \in D} \min_{\eta \in \Gamma} \biggl(i(x,t,\delta,\eta) D_{x}u  +  h(x,t,\delta,\eta) u+ f(x,t,\delta,\eta)\biggr) =0, \\ \quad (x,t) \in \mathbb{R}^{N} \times [0,T)
\end{multline}
with the terminal condition
$u(x,T)=\beta(x)$.
 \begin{as} \label{as3}
 \text{ }
 \begin{itemize}
\item[{\bf C1)}]
The matrix $[a_{i,j}(x,t)]$, $i,j=1,2,\ldots, N$ is symmetric, the coefficients  are Lipschitz continuous on compact subsets in $\mathbb{R}^{N} \times [0,T]$. In addition there exists a constant $\mu>0$ such that
 for any  $\xi \in \mathbb{R}^{N} $
\[ 
\sum_{i,j=1}^{N} a_{i,j}(x,t) \xi_i \xi_{j} \geq \mu |\xi|^{2}, \quad (x,t) \in \mathbb{R}^{N} \times [0,T].
 \]

\item[{\bf C2)}] Functions $f$, $h$, $i$  are continuous, there exists a strictly positive sequence $\{L_{n}\}_{n \in \mathbb{N}}$ such  that for all $\zeta=f,h,i$ and for all  $\delta \in D$, $\eta \in \Gamma$, $(x,t) \in B_{n} \times [0,T]$
\begin{equation*} 
|\zeta(x,t,\delta,\eta)-\zeta(\bar{x},\bar{t},\delta,\eta)|\leq L_n( |x- \bar{x}|+|t-\bar{t}|). \\ 
 \label{lipcond1}
\end{equation*}
\item[{\bf C3)}] The function $\beta$ is uniformly bounded and Lipschitz continuous on compact subsets of $\mathbb{R}^{N}$.
\item[{\bf C4)}] The function $f$ is uniformly bounded and $h$ is bounded 
above.
\end{itemize}
 \end{as}
 
We can now present an immediate consequence of Theorem \ref{main}.
 
\begin{cor} Under Assumption \ref{as3}, there exists a bounded classical solution $u \in \mathcal{C}^{2,1}(\mathbb{R}^{N}\times[0,T)) \cap \mathcal{C}(\mathbb{R}^{N}\times[0,T])$ to \eqref{equationL}. 
\end{cor}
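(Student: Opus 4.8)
The plan is to obtain the corollary as a direct application of Theorem \ref{main}: it suffices to check that the Hamiltonian
\[
H(p,u,x,t):=\max_{\delta \in D}\min_{\eta \in \Gamma}\bigl(i(x,t,\delta,\eta)\,p + h(x,t,\delta,\eta)\,u + f(x,t,\delta,\eta)\bigr),
\]
together with the matrix $a$ and the terminal datum $\beta$, satisfies all the requirements of Assumption \ref{as2}. Here condition {\bf B1} is nothing but {\bf C1} and {\bf B3} is nothing but {\bf C3}, so the whole argument reduces to establishing {\bf B2}.

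The one tool used throughout is the elementary fact that the map $\Phi \mapsto \max_{\delta \in D}\min_{\eta \in \Gamma}\Phi(\delta,\eta)$ is monotone and non-expansive with respect to the supremum norm over $D \times \Gamma$; in particular, if $\sup_{\delta,\eta}|\Phi_1(\delta,\eta) - \Phi_2(\delta,\eta)| \le \varepsilon$ then the corresponding max--min values differ by at most $\varepsilon$. I would first record this observation, and then verify {\bf B2} point by point. Boundedness of $f$ ({\bf C4}) gives $|H(0,0,x,t)| = |\max_{\delta} \min_{\eta} f| \le \sup|f|$. Since $h$ is bounded from above by some constant, say $h \le K$, for $u > \bar u$ the affine functions satisfy $h u + f - (h \bar u + f) = h(u - \bar u) \le K(u-\bar u)$ pointwise in $(\delta,\eta)$, so monotonicity of $\max_{\delta}\min_{\eta}$ yields the one-sided Lipschitz bound $H(0,u,x,t) - H(0,\bar u,x,t) \le K(u - \bar u)$; enlarging $K$ handles both constants at once. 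Because $D$ and $\Gamma$ are compact and $i,h,f$ are jointly continuous, each of them is bounded on $B_n \times [0,T] \times D \times \Gamma$, say by $C_n$; then for $|u| \le m$, $|x| \le n$ one gets $|H(0,u,x,t)| \le C_n m + C_n$, which is \eqref{boundu}, while $H$ depends on $p$ only through the term $i(x,t,\delta,\eta)\,p$, so $|H(p,u,x,t) - H(\bar p,u,x,t)| \le C_n |p - \bar p|$, which is \eqref{lipschitz} (after taking $K_{m,n}$ large enough to serve both). Finally, to see that $H$ is Lipschitz --- hence H\"{o}lder --- on an arbitrary compact set $\mathcal{K} \subset \mathbb{R}^{2N+1} \times [0,T]$, one projects the $x$-component into a ball $B_n$ and bounds $(p,u)$ by some $R$; the family of affine-in-$(p,u)$ functions $(p,u,x,t) \mapsto i p + h u + f$ is then jointly Lipschitz on $\mathcal{K} \times D \times \Gamma$ with a constant depending only on $n$, $R$, $L_n$ and $C_n$ --- the dependence on $(x,t)$ being controlled by {\bf C2} uniformly in $(\delta,\eta)$, and the dependence on $(p,u)$ by the bound $C_n$ on $i$ and $h$ --- and non-expansiveness transfers this estimate to $H$.

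Once Assumption \ref{as2} has been checked, Theorem \ref{main} applies verbatim and produces a bounded classical solution $u \in \mathcal{C}^{2,1}(\mathbb{R}^{N}\times[0,T)) \cap \mathcal{C}(\mathbb{R}^{N}\times[0,T])$ to \eqref{equationL}. I do not anticipate a genuine obstacle: the only two slightly delicate points are translating the assumption that $h$ is bounded from above into the one-sided Lipschitz estimate for $H$ in the $u$ variable, and making sure the outer $\max_{\delta}\min_{\eta}$ does not spoil the continuity and Lipschitz bounds inherited from $i$, $h$, $f$ --- both taken care of by the monotonicity and non-expansiveness of the max--min operator.
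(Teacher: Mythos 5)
Your proposal is correct and follows the same route as the paper: the paper states the corollary as an immediate consequence of Theorem \ref{main}, and your argument simply supplies the routine verification that the max--min Hamiltonian inherits Assumption \ref{as2} from Assumption \ref{as3}, using exactly the monotonicity/non-expansiveness of $\max_{\delta}\min_{\eta}$ that makes this work. The checks of the one-sided Lipschitz bound in $u$ (from $h$ bounded above), the local bounds \eqref{boundu}--\eqref{lipschitz}, and the local Lipschitz continuity of $H$ are all sound.
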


\begin{proof}
For the proof it is sufficient to define
\[
H(p,u,x,t):=\max_{\delta \in D} \min_{\eta \in \Gamma} \Pi(p,u,x,t,\delta,\eta),
\]
where
\[
\Pi(p,u,x,t,\delta,\eta)=i(x,t,\delta,\eta)p +  h(x,t,\delta,\eta) u+ f(x,t,\delta,\eta)
\]
and use the inequality
\[
|H(p,u,x,t) - H(\bar{p},\bar{u},\bar{x},\bar{t}) | \leq \max_{\delta \in D} \max_{\eta \in \Gamma} |\Pi(p,u,x,t,\delta,\eta)-\Pi(\bar{p},\bar{u},\bar{x},\bar{t},\delta,\eta)|.
\]
\end{proof} 
 
In some cases it is possible to extend the above result to the case when functions $f$ and $g$ might be unbounded. We need first the following Lemma:

\begin{lem} \label{lemma}
Assume that $X(n)$ stands for a strong solution to 
\[
dX_{t}=b_{n}(X_{t},t,\omega)dt + \sigma_{n}(X_{t},t,\omega) dW_{t},
\]
where $b_{n}$ and $\sigma_{n}$ are sequences of  functions such that 
\[
b_{n}: \mathbb{R}^{N} \times [0,T] \times \Omega \to \mathbb{R}^{N} , \; \sigma_{n}: \mathbb{R}^{N} \times [0,T] \times \Omega  \to L(\mathbb{R}^{N}, \mathbb{R}^{N})
\]
and there exist $K,M>0$ such that  for all $x \in \mathbb{R}^{N}$,  $n \in \mathbb{N}$ and $\omega \in \Omega$
\[
|b_{n}(x,t,\omega)|\leq K(1+|x|), \; |\sigma_{n}(x,t,\omega)| \leq M.
\]
Then for all $A>0$ there exists a continuous function $\hat{R}$ such that for all $ n \in \mathbb{N}$ and $(x,t) \in \mathbb{R}^{N} \times [0,T]$
\[
\mathbb{E}_{x,t} \sup_{t \leq s \leq T} e^{A |X_{s}(n)|}  \leq \hat{R}(x). 
\]
\end{lem}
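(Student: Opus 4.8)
The plan is to control the exponential moment of $X_t(n)$ by a Gronwall-type argument applied to the function $t \mapsto \mathbb{E}_{x,t} e^{A|X_s(n)|}$, after first replacing $e^{A|x|}$ by a smooth, convex surrogate so that It\^o's formula applies. Concretely, let $\phi(x) := \sqrt{1+|x|^2}$ so that $\phi \in \mathcal{C}^\infty$, $\phi(x) \geq |x|$, $|D\phi| \leq 1$ and $|D^2\phi| \leq C/\phi$; then set $\psi(x) := e^{A\phi(x)}$. Applying It\^o's formula to $\psi(X_s(n))$ between $t$ and $r$, the drift term contributes $A D\phi \cdot b_n \psi$ and the second-order term contributes $\tfrac{1}{2}\mathrm{Tr}(\sigma_n\sigma_n^\top(A^2 D\phi \otimes D\phi + A D^2\phi))\psi$. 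Using $|D\phi|\leq 1$, the linear growth of $b_n$ and the bound $|\sigma_n|\leq M$, all of these are dominated by $(C_1 + C_2|X_s(n)|)\psi(X_s(n)) \leq (C_1 + C_2\phi(X_s(n)))\psi(X_s(n))$, and since $\phi(x)e^{A\phi(x)} \leq \tfrac{1}{A}e^{2A\phi(x)}$ one must be slightly careful: better to absorb the linear factor by noting $x e^{A\phi(x)} \leq C_\varepsilon e^{(A+\varepsilon)\phi(x)}$ for any $\varepsilon>0$, so I would run the estimate for the exponent $A' = A + 1$ (which is fine since $A$ was arbitrary) and conclude a bound for exponent $A$.

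The key steps, in order, are: (1) fix $A>0$, set $A' = 2A$ say, and introduce the smooth surrogate $\psi(x)=e^{A'\phi(x)}$ with $\phi(x)=\sqrt{1+|x|^2}$; (2) write It\^o's formula for $\psi(X_s(n))$, take expectations, and use the growth/boundedness hypotheses on $b_n,\sigma_n$ together with $|D\phi|\leq 1$, $|D^2\phi|\leq C$ to obtain
\[
\mathbb{E}_{x,t}\,\psi(X_r(n)) \leq \psi(x) + C\int_t^r \mathbb{E}_{x,t}\,\psi(X_s(n))\,ds,
\]
where $C$ depends only on $A,K,M,T$ and not on $n$, $x$ or $t$; (3) apply Gronwall's inequality to get $\mathbb{E}_{x,t}\,\psi(X_r(n)) \leq \psi(x) e^{C(r-t)} \leq e^{CT}\psi(x)$ for all $r \in [t,T]$; (4) pass from the fixed-time bound to the bound on $\sup_{t\leq r\leq T} e^{A|X_r(n)|}$ by a maximal-inequality argument: since $e^{A|X_r(n)|} \leq \psi(X_r(n))^{1/2}$ and $\{\psi(X_r(n)) e^{-C(r-t)}\}$ (or a suitable stopped version) is a supermartingale-like object, apply Doob's $L^2$ maximal inequality to $\psi(X_r(n))^{1/2}$, or alternatively re-run step (2)--(3) with the exponent $A'$ already chosen large enough that $\mathbb{E}\sup \leq C\,\mathbb{E}\,\psi(X_T)^{1/2}$ via Doob. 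Finally set $\hat R(x) := C e^{CT/2}\phi(x)^{1/2}\cdot(\text{const})$, a continuous function of $x$ independent of $n$ and $t$.

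The main obstacle I anticipate is step (4), namely converting the pointwise-in-time exponential moment bound into a bound on the expectation of the supremum over $[t,T]$, uniformly in $n$. The cleanest route is Doob: because $D\phi$ is bounded and $b_n$ has only linear growth, the process $M_s := \psi(X_s(n)) - \psi(X_t(n)) - \int_t^s (\text{generator terms})\,dr$ is a genuine (local) martingale, and the drift-correction term is dominated by $C\int_t^s \psi(X_r(n))\,dr$ pathwise up to the martingale part; hence $\psi(X_s(n))$ is bounded above by a submartingale plus an integrable increasing process, and Doob's inequality applied after taking the exponent large gives $\mathbb{E}_{x,t}\sup_{t\leq s\leq T}\psi(X_s(n))^{1/2} \leq 2\,(\mathbb{E}_{x,t}\psi(X_T(n)))^{1/2} \leq 2 e^{CT/2}\psi(x)^{1/2}$. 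One should localize with stopping times $\tau_R = \inf\{s: |X_s(n)|\geq R\}$ to justify the martingale property (the local martingale is a true martingale on each $[t,\tau_R]$ because of the $|\sigma_n|\leq M$ bound), prove the estimate with the stopped process, and then let $R\to\infty$ using Fatou; the $n$-uniformity is automatic because every constant in sight depends only on $A,K,M,T$. The other minor technical point is the elementary inequality $|x|e^{A\phi(x)}\leq C_A e^{2A\phi(x)}$ used to absorb the linear growth of $b_n$ into the exponential, which is why we work with a doubled exponent throughout.
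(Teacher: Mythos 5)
Your plan hinges on the inequality in step (2), namely
\begin{equation*}
\mathbb{E}_{x,t}\,\psi(X_r(n)) \leq \psi(x) + C\int_t^r \mathbb{E}_{x,t}\,\psi(X_s(n))\,ds ,
\end{equation*}
with $C$ depending only on $A,K,M,T$, and this is exactly where the argument breaks. Applying It\^o's formula to $\psi=e^{A'\phi}$, the drift contributes $A'\,D\phi\cdot b_n\,\psi$, and since $b_n$ only has linear growth this term is of size $A'K(1+|X_s|)\psi(X_s)$: it is \emph{not} dominated by $C\psi(X_s)$. Your proposed fix, $|x|e^{A'\phi(x)}\leq C_\varepsilon e^{(A'+\varepsilon)\phi(x)}$, does not repair this, because it replaces the troublesome term by a \emph{strictly higher} exponential moment; the resulting inequality reads $\mathbb{E}\,\psi_{A'}(X_r)\leq \psi_{A'}(x)+C_\varepsilon\int_t^r\mathbb{E}\,\psi_{A'+\varepsilon}(X_s)\,ds$, so the Gronwall loop never closes — starting from a larger exponent $A'=A+1$ or $2A$ just reproduces the same problem one level up, ad infinitum. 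Since step (4) (Doob/BDG for the supremum) is run on the same $\psi$ and presumes the drift of $\psi$ is of order $\psi$ (your ``supermartingale-like object'' with discount $e^{-C(r-t)}$), the gap propagates there as well. The localization-and-Fatou remarks are fine but do not touch this issue.

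The statement is nevertheless true, and there are two ways to get it. The paper's route avoids It\^o on $e^{A\phi}$ altogether: it applies Gronwall \emph{pathwise} to the integral equation, giving $|X_t(n)|\leq\bigl(|x_0|+KT+\sup_{s\leq T}\bigl|\int_0^s\sigma_n\,dW\bigr|\bigr)e^{KT}$, so the linear-growth drift is absorbed into the constant $e^{KT}$ multiplying the exponent; one is then left with exponential moments of the supremum of a stochastic integral with integrand bounded by $M$, which are controlled by writing $e^{A\int\sigma_n dW}$ as an exponential martingale times a bounded factor and applying the martingale maximal inequality (together with $e^{|Z|}\leq e^Z+e^{-Z}$). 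If you want to keep your It\^o--Gronwall scheme, the standard repair is a \emph{time-dependent exponent}: work with $\psi(x,s)=e^{\alpha(s)\phi(x)}$, $\alpha(s)=Ae^{K(T-s)}$, so that the term $\alpha'(s)\phi\,\psi$ coming from $\partial_s\psi$ cancels the linear-growth contribution $\alpha K\phi\,\psi$ of the drift; after that cancellation the remaining terms are indeed $\leq C\psi$ with $C=C(A,K,M,T)$, Gronwall closes, and your step (4) can then be carried out (with BDG or Doob at a doubled exponent) to pass to the supremum.
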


\begin{proof} We start with proving some pathwise inequalities which hold almost surely in $\Omega$. 
 If $b_{n}$ satisfies the linear growth condition, then there exists $K>0$ such that for all $k \in [t,T]$
\[
 |X_{k}(n)| \leq|x|+ KT + K\int_{t}^{k} |X_{s}(n)| ds + \sup_{0 \leq k \leq T} \left|\int_{t}^{k} \sigma_{n}(X_{s}(n),s,\omega) dW_{s}\right|.
\]
Therefore,
\[
 |X_{k}(n)| \leq A_{T} + K\int_{t}^{k} |X_{s}(n)| ds, \quad  t \leq  k \leq T,
\] 
where 
\[
A_{T} := \left(|x|+KT+ \sup_{t \leq k \leq T} \left|\int_{t}^{k} \sigma_{n}(X_{s}(n),s,\omega) dW_{s}\right|\right).
\] 
Using the Gronwall inequality we have
\[
|X_{k}(n)| \leq \left(|x|+KT+ \sup_{t \leq k \leq T} \left|\int_{t}^{k} \sigma_{n}(X_{s}(n),s,\omega) dW_{s}\right|\right)e^{KT}.
\]
Therefore, it is sufficient to find a uniform bound for
\[
\mathbb{E}_{x,t} \sup_{t \leq k \leq T} e^{A \left|\int_{t}^{k}\sigma_{n}(X_{s}(n),s,\omega) dW_{s} \right|}.
\] 
Note that  $e^{|Z|} \leq e^{Z}+e^{-Z}$ and
\begin{multline*}
\mathbb{E}_{x,t} \sup_{t \leq k \leq T} e^{A \int_{t}^{k} \sigma(X_{s}(n),s,\omega) dW_{s}} \\=
\mathbb{E}_{x,t} \sup_{t \leq k \leq T} e^{A \int_{t}^{k} \sigma_{n}(X_{s}(n),s,\omega) dW_{s}-\frac{1}{2}A^{2}\int_{t}^{k} Tr(\sigma_{n}(X_{s}(n),s,\omega) \sigma_{n}^{T}(X_{s}(n),s,\omega)ds } \times \\ \times e^{\frac{1}{2}A^{2}\int_{t}^{k} Tr(\sigma_{n}(X_{s}(n),s,\omega) \sigma_{n}^{T}(X_{s}(n),s,\omega))ds }.
\end{multline*}
Since $\sigma_{n}$ is uniformly bounded, the process
\[
e^{\frac{1}{2} A^{2}\int_{t}^{k} Tr(\sigma_{n}(X_{s}(n),s,\omega) \sigma_{n}^{T}(X_{s}(n),s,\omega))ds }
\]
is bounded as well.
Now, we can use  the martingale inequality to ensure the existence of a uniform constant $C_{T}>0$ such that
\[
\mathbb{E}_{x,t} \sup_{t \leq k \leq T} M_{k}^{n}  \leq C_{T} \sqrt{\mathbb{E}_{x,t} \left[M_{T}^{n}\right]^{2}}, 
\]
where
\[
M_{t}^{n} := e^{A \int_{t}^{k} \sigma_{n}(X_{s}(n),s,\omega) dW_{s}-\frac{1}{2}A^{2}\int_{t}^{k} Tr(\sigma_{n}(X_{s}(n),s,\omega) \sigma_{n}^{T}(X_{s}(n),s,\omega))ds } .
\]
The conclusion follows from the fact that $\left[M_{T}^{n}\right]^{2}$ can be rewritten in the form 
\[
[M_{T}^{n}]^{2} =G_{T}^{n} N_{T}^{n},
\]
where the random variable $G_{T}^{n} $ is used to change  the measure and  the family $N_{T}^{n}$ is  uniformly bounded.
\end{proof}

\begin{as} \label{as4}
  \text{ }
\begin{itemize}
\item[{\bf D1)}]
The matrix $a$ is symmetric, $a=\sigma \sigma^{T}$, the coefficients $\sigma_{i,j}(x,t)$, $i,j=1,\ldots, N$ are Lipschitz continuous on compact subsets in $\mathbb{R}^{N} \times [0,T]$. In addition there exists a constant $\mu>0$ such that for any  $\xi \in \mathbb{R}^{N} $
\[ 
\sum_{i,j=1}^{N} a_{i,j}(x,t) \xi_i \xi_{j} \geq \mu |\xi|^{2}, \quad (x,t) \in \mathbb{R}^{N} \times [0,T].
 \]

\item[{\bf D2)}] Functions $f$, $h$, $i$  are continuous and there exists a strictly positive sequence $\{L_{n}\}_{n \in \mathbb{N}}$ such  that for all $\zeta=f,h,i$ and for all $\delta \in D$, $\eta \in \Gamma$, $(x,t) \in B_{n} \times [0,T]$
\begin{equation*} 
|\zeta(x,t,\delta,\eta)-\zeta(\bar{x},\bar{t},\delta,\eta)|\leq L_n( |x- \bar{x}|+|t-\bar{t}|). \\  
\end{equation*}

\item[{\bf D3)}] The function $\beta$ is Lipschitz continuous on compact subsets of $\mathbb{R}^{N}$.

\item  [{\bf D4)}] There exist $A,B>0$ such that for all $\delta \in \mathcal{D}$, $\eta \in \mathcal{N}$, $(x,t) \in \mathbb{R}^{N} \times [0,T]$
\begin{gather*}
|f(x,t,\delta,\eta)| +  |\beta(x)| \leq  B e^{A|x|}, \;
|\sigma(x,t)| \leq B \\
|h(x,t,\delta,\eta)|+|i(x,t,\delta,\eta)|  \leq B(1+|x|)
\end{gather*}
or for all $\delta \in \mathcal{D}$, $\eta \in \mathcal{N}$, $(x,t) \in \mathbb{R}^{N} \times [0,T]$
\begin{gather*}
|f(x,t,\delta,\eta)| +  |\beta(x)| \leq  B e^{A|x|}, \;
|\sigma(x,t)| \leq B ,\\
 |i(x,t,\delta,\eta)|  \leq B(1+|x|), \quad h(x,t,\delta,\eta) \leq B.
\end{gather*}
\end{itemize}
\end{as}
\begin{thm} \label{last} Under Assumption \ref{as4}, there exists a classical solution $u \in \mathcal{C}^{2,1}(\mathbb{R}^{N}\times[0,T)) \cap \mathcal{C}(\mathbb{R}^{N}\times[0,T])$ to \eqref{equationL}.
\end{thm}
\begin{proof} Define

\[
\sigma_{n}(x,t):=\begin{cases}
\sigma(x,t) \quad  & \text{if} \quad |x| \leq n,\\
 \sigma(\frac{nx}{|x|},t), & \text{if} \quad  |x| \geq n,  \end{cases}\quad  a_{n}:=\sigma_{n} \sigma_{n}^{T}, \quad \beta_{n}(x):= \zeta_{n}(x) \beta(x),
\]

\[
i_{n}(x,t,\delta,\eta):= \zeta_{n} (x)i(x,t,\delta,\eta), \quad
 f_{n}(x,t,\delta,\eta):= \zeta_{n} (x)f(x,t,\delta,\eta),
 \]
\[
 h_{n}(x,t,\delta,\eta):= \zeta_{n} (x)h(x,t,\delta,\eta) \]
 \[ \text{ or }   h_{n}(x,t,\delta,\eta):= h(x,\delta,\eta)\; (\text{if } \; h(x,t,\delta,\eta) \leq B),
\] 
 where
\[
\zeta_{n}(z):=\begin{cases}
1 \quad  & \text{if} \quad |z| \leq n,\\
 \left(2-\frac{|z|}{n}\right), & \text{if} \quad n \leq |z| \leq 2n,\\
  0        &\text{if} \quad  |z| \geq 2n. \end{cases}
\]
Functions $a_{n}$, $f_{n}$, $i_{n}$, $h_{n}$, $\beta_{n}$ are bounded (or bounded above in the case of the function $h_{n}$) and we still have 
\begin{gather} 
|h_{n}(x,t,\delta,\eta)| +|i_{n}(x,t,\delta,\eta)| \leq B(1+|x|) \;\text{ or } \; h_{n}(x,t,\delta,\eta) \leq B,  \label{1}\\
|f_{n}(x,t,\delta,\eta)|   + |\beta_{n}(x)| \leq  B e^{A|x|}, \\
|\sigma_{n}(x,t)| \label{2} \leq B.
\end{gather}
Let $u_{n}$ denote any classical solution to the equation 
\begin{multline} \label{eq}   u_{t}+ \tfrac12\Tr( a_{n}(x,t) D_{x}^{2}u) \\  +\max_{\delta \in D} \min_{\eta \in \Gamma} \biggl(i_{n}(x,t,\delta,\eta) D_{x} u  +  h_{n}(x,t,\delta,\eta) u+ f_{n}(x,t,\delta,\eta)\biggr) =0, \\ \quad (x,t) \in \mathbb{R}^{N} \times [0,T),
\end{multline}
with the terminal condition $u(x,T)=\beta_{n}(x,T)$.
Using measurable selection theorems to $\min$ and $\max$ in \eqref{eq}, we can find 
Borel measurable coefficients $i_{n}^{*}, f_{n}^{*}, h_{n}^{*}$ such that $u_{n}$ is a solution to
\begin{equation} \label{n:eq}
u_{t}+ \tfrac12\Tr( a_{n}(x,t) D_{x}^{2}u_{n}) +  \biggl(i_{n}^{*}(x,t) D_{x} u_{n}  +  h_{n}^{*}(x,t) u_{n}+ f_{n}^{*}(x,t)\biggr) =0.
\end{equation}
For $u_{n}$ we have the following stochastic representation 
\[
u_{n}(x,t)=\mathbb{E}_{x,t}\left( \int_{t}^{T} e^{\int_{t}^{s} h_{n}^{*}(X_{k},k) dk}f_{n}^{*}(X_{s},s) ds + e^{\int_{t}^{T} h_{n}^{*}(X_{k},k) dk}\beta_{n}(X_{T})\right).
\]
Since we have \eqref{1} -- \eqref{2}, we can use  Lemma \ref{lemma} to guarantee that there exists a continuous function $R(x)$ such that for all $(x,t) \in \mathbb{R}^{N} \times [0,T]$
\begin{equation*} \label{uniform:n}
\sup_{n} \mathbb{E}_{x,t}\left( \int_{t}^{T} e^{\int_{t}^{s} h_{n}^{*}(X_{k},k) dk}|f_{n}^{*}(X_{s},s) |ds + e^{\int_{t}^{T} h_{n}^{*}(X_{k},k) dk}|\beta_{n}(X_{T})|\right) \leq R(x).
\end{equation*}
and consequently  $\sup_{n}|u_{n}(x,t)| \leq R(x)$. Now we can use the reasoning from the proof of Theorem \ref{main}.

%
\end{proof}

{\bf Acknowledgments}. I would like to gratefully thank referees for a
careful reading of the manuscript and a list of helpful recommendations.

\end{document}